\documentclass[12pt,a4paper]{amsart}
\usepackage{version}
\usepackage{amsmath,amsthm,amssymb,latexsym,epsfig,graphicx,subfigure}
\usepackage[backref=page]{hyperref} 
\numberwithin{equation}{section}

\newtheorem{thm}{Theorem}[section]
\newtheorem{lm}[thm]{Lemma}

\theoremstyle{definition}

\theoremstyle{definition}
\newtheorem{rem}[thm]{Remark}
\newtheorem{rems}[thm]{Remarks}

\newcommand{\Rn}{\mathbb{R}^{n}}
\newcommand{\Pn}{\mathbb{P}^{n}}
\newcommand{\p}{\mathbb{P}}

\newcommand{\R}{\mathbb{R}}

\newcommand {\grtrsim} {\ {\raise-.5ex\hbox{$\buildrel>\over\sim$}}\ }
\newcommand{\e}{\varepsilon}

\newcommand{\khii}{\text{\lower -.4ex\hbox{$\chi$}}}
\DeclareMathOperator{\spt}{spt}

\DeclareMathOperator{\spn}{span}

\begin{document}
\title {On the parabolic Hausdorff dimension of orthogonal projections}
\author{Terence L.~J.~Harris}
\address{School of Mathematics and Physics\\ The University of Queensland\\ St Lucia QLD 4067\\  Australia}
\email{terry.harris@uq.edu.au}
\author{Pertti Mattila}
\address{Department of Mathematics and Statistics \\
P.O. Box 68 \\  FI-00014 University of Helsinki \\ Finland,}
\email{pertti.mattila@helsinki.fi}
\subjclass[2000]{Primary 28A75} \keywords{Parabolic space, Hausdorff dimension, orthogonal projections}
\begin{abstract}
    For Borel sets $A\subset\R^n\times \R$ %I
    we prove lower bounds for the parabolic Hausdorff dimension of the orthogonal projections of $A$ on generic $m$-dimensional linear subspaces of $\R^n\times \R$.
\end{abstract}
\maketitle

 \section{Introduction}

Marstrand's projection theorem \cite{Mar54} tells us the following, see, for example, \cite{Mat95} or \cite{Mat15}:

\begin{thm}\label{Marstrand}
Let $A\subset \Rn$ be a Borel set.\\
(1) If $\dim_E A\leq m$, then $\dim_E P_V(A)=\dim_E A$ for almost all $V\in G(n,m)$.\\
(2) If $\dim_E A>m$, then $\mathcal L^m(P_V(A))>0$ for almost all $V\in G(n,m)$.%\\
\end{thm}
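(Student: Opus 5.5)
We prove both parts by the energy method (Kaufman's approach). The tools are Frostman's lemma and averaging the energy integrals over the Grassmannian with respect to the orthogonally invariant probability measure $\gamma_{n,m}$ on $G(n,m)$.

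Upper bounds are trivial. $P_V$ is Lipschitz, so $\dim_E P_V(A)\le \dim_E A$, and $\dim_E P_V(A)\le m$. For the lower bound in (1), fix $0<s<\dim_E A\le m$. By Frostman's lemma for Borel sets there is a compactly supported Borel measure $\mu\neq 0$ with $\spt\mu\subset A$ and $\mu(B(x,r))\le r^{t}$ for some $t\in(s,\dim_E A)$. Then its $s$-energy is finite:
\[
I_s(\mu)=\iint |x-y|^{-s}\,d\mu x\,d\mu y<\infty .
\]
Let $\mu_V=P_{V\sharp}\mu$ be the push-forward, a measure on $P_V(A)$. Fubini gives
\[
\int I_s(\mu_V)\,d\gamma_{n,m}V=\iint \int |P_V(x-y)|^{-s}\,d\gamma_{n,m}V\,d\mu x\,d\mu y .
\]
The key estimate is
\[
\gamma_{n,m}\bigl(\{V: |P_V(z)|\le \delta\}\bigr)\lesssim (\delta/|z|)^m
\]
for $z\in\R^n\setminus\{0\}$ and $\delta>0$. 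This is proved by reducing, via invariance, to the case $z$ a unit vector. One then notes that $P_V(z)$ is distributed like the projection of a uniformly random unit vector onto a fixed $m$-plane, and the set where this has length $\le\delta$ has spherical measure $\lesssim \delta^m$. Integrating the distribution function with $s<m$ gives $\int |P_V(z)|^{-s}\,d\gamma_{n,m}V\le C(n,m,s)|z|^{-s}$. Hence the averaged energy is bounded by $C\,I_s(\mu)<\infty$, so $I_s(\mu_V)<\infty$ for almost every $V$. For such $V$ the measure $\mu_V$ is nonzero with finite $s$-energy and lives on the compact set $P_V(\spt\mu)\subset P_V(A)$, so $\dim_E P_V(A)\ge s$. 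Letting $s\uparrow\dim_E A$ along a countable sequence gives (1).

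For (2), take a Frostman measure with $I_m(\mu)<\infty$, which is possible since $\dim_E A>m$. We show $\mu_V\ll\mathcal L^m$ with $L^2$ density for almost every $V$; this gives $\mathcal L^m(P_V(A))\ge\mathcal L^m(P_V(\spt\mu))>0$. Use the lower derivative. By Fatou's lemma,
\[
\int\!\!\int \liminf_{\delta\to0}\delta^{-m}\mu_V(B(P_Vx,\delta))\,d\mu x\,d\gamma_{n,m}V
\le \liminf_{\delta\to 0}\delta^{-m}\iint \gamma_{n,m}\bigl(\{V:|P_V(x-y)|\le\delta\}\bigr)\,d\mu x\,d\mu y .
\]
By the same key estimate this is $\lesssim I_m(\mu)<\infty$. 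So for almost every $V$ we have $\int \underline{D}(\mu_V,u)\,d\mu_V u<\infty$, where $\underline D$ is the lower density with respect to $\mathcal L^m$. The standard differentiation theory of measures then gives $\mu_V\ll\mathcal L^m$ with density in $L^2$.

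The main obstacle is the geometric estimate $\gamma_{n,m}(\{V:|P_V z|\le\delta\})\lesssim\delta^m$ for unit $z$. I would handle it by writing $\gamma_{n,m}$ as the image of Haar measure on $O(n)$. The condition then becomes $|P_{V_0}(g z)|\le\delta$ with $gz$ uniform on $S^{n-1}$. The resulting set is a $\delta$-neighbourhood of the great subsphere $S^{n-1}\cap V_0^\perp$, which has codimension $m$ in $S^{n-1}$, so its measure is $\lesssim\delta^m$. The rest is routine once Frostman's lemma and the density theorem are assumed.
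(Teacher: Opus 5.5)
Your proposal is correct. It is essentially the standard argument that the paper relies on but does not reprove (it only cites Marstrand and Mattila's books): Kaufman's averaging of $I_s(P_{V\sharp}\mu)$ over $G(n,m)$ gives (1), and the Fatou/lower-derivative argument, yielding $P_{V\sharp}\mu\ll\mathcal L^m$ with $L^2$ density, gives (2), both resting on the estimate \eqref{gamma}, as in Mattila's 1995 book.
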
 
Here $G(n,m)$ is the Grassmannian of linear $m$-dimensional subspaces of the Euclidean $n$-space $\R^n$, $P_V:\Rn\to V$ is the orthogonal projection, $\mathcal L^m$ is the Lebesgue measure on $\R^m$, and on the $m$-planes $V\in G(n,m)$, and $\dim_E$ denotes the Hausdorff dimension with respect to the Euclidean metric.

In this paper we shall investigate analogous results for the Hausdorff dimension $\dim$ with respect to the parabolic metric:
let $\|\cdot\|$ be the parabolic `norm',
$$\|(x,t)\|=\sqrt{|x|^2+|t|},$$
in $\Pn=\Rn\times\R$ and $d$ the corresponding metric $d(p,q)=\|p-q\|$. Here, and later, $|x|$ is the Euclidean norm of $x$. We have 
$\dim \Pn = n+2$ and $\dim V = m+1$ for every $V\in G(n+1,m)$, which is not contained in the horizontal plane $\{(x,0):x\in\Rn\}$.

For $n=2$ we shall prove the complete result:

\begin{thm}\label{projn=2}
Let  $m=1$ or\ $2$\ and let $A\subset \p^2$ be a Borel set. 
\begin{itemize}
\item[(1)] If $\dim A \leq m+1$, then $\dim P_V(A) \geq \dim A$ for almost all $V\in G(3,m)$.
\item[(2)] If $\dim A  > m+1$, then $\mathcal L^m(P_V(A))>0$ for almost all $V\in G(3,m)$. 
\end{itemize}
\end{thm}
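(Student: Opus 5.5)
The plan is to use the potential-theoretic (energy) method behind Theorem~\ref{Marstrand}, adapted to the parabolic metric $d$. For part (1), I fix $s<\dim A$ and, by Frostman's lemma in the doubling metric space $(\p^2,d)$, take a Borel probability measure $\mu$ with compact support in $A$ and finite parabolic $s$-energy
$$I_s(\mu)=\iint \|p-q\|^{-s}\,d\mu p\,d\mu q<\infty .$$
With $\gamma_{3,m}$ the invariant measure on $G(3,m)$, it suffices to show
$$\int I_s(P_{V\sharp}\mu)\,d\gamma_{3,m}V \lesssim I_s(\mu),$$
where the energy on $V$ uses the restriction of $d$. Then $\dim P_V(A)\ge s$ for almost all $V$, and we let $s\uparrow\dim A$. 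By Fubini, this reduces to the sublevel estimate
$$\gamma_{3,m}\bigl(\{V:\|P_Vz\|\le\delta\}\bigr)\lesssim \bigl(\delta/\|z\|\bigr)^{s}\qquad\text{for } z=(x,t)\neq 0,\ \delta>0,\qquad (\ast)$$
for the relevant range $s\le m+1$. Integrating $(\ast)$ in $\delta$ gives $\int\|P_Vz\|^{-s}\,d\gamma_{3,m}V\lesssim\|z\|^{-s}$ for $s$ strictly below the critical exponent.

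To prove $(\ast)$, I write $P_Vz=(w,\tau)$ with $w\in\R^2$, $\tau\in\R$. Then $\|P_Vz\|\le\delta$ means $|w|\le\delta$ and $|\tau|\le\delta^2$. I split into the \emph{horizontal-dominant} case $|x|^2\ge|t|$ and the \emph{vertical-dominant} case $|x|^2<|t|$, and parametrize $V$ by a unit normal $\nu$ (for $m=2$) or a unit direction $e$ (for $m=1$).

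In the horizontal-dominant case, $\|z\|\approx|x|$. The horizontal component of $P_Vz$ must be $\delta$-small, which confines $\nu$ (or $e$) to a set of measure $\lesssim(\delta/|x|)^m$. The extra constraint $|\tau|\le\delta^2$ should give one more factor. It restricts a second coordinate of $\nu$ to an interval of length about $\delta^2/|z|_E$ relative to the scale $\delta/|x|$ already present, which heuristically supplies the missing power to reach $(\delta/\|z\|)^{m+1}$ up to the exponent loss allowed when $s<m+1$.

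In the vertical-dominant case, $\|z\|\approx|t|^{1/2}$. Here the vertical constraint $|t|\,|\nu_h|^2\lesssim\delta^2$ (with $\nu_h$ the horizontal part of $\nu$) is the effective one. It forces $\nu$ into a cap of radius $\delta/|t|^{1/2}=\delta/\|z\|$.

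I expect the main obstacle to be exactly this vertical-dominant regime. A direct computation for $z=(0,t)$ and $m=2$ shows that the cap has measure only $(\delta/\|z\|)^2$, not $(\delta/\|z\|)^{3}$. So the naive pointwise bound $(\ast)$ fails for $s>m$ when $z$ is nearly vertical, and the argument must exploit that the $t$-direction carries parabolic weight $2$.

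I would first try to handle this by a dyadic decomposition of $\mu\times\mu$ according to the angle of $p-q$ with the vertical axis. For pairs at vertical angle about $2^{-k}$, the loss in $(\ast)$ would be compensated by the fact that such pairs within parabolic distance $r$ lie in a thin parabolic ``cone'' region. Frostman-type bounds on the $\mu$-measure of that region would recover the missing factor. If that fails, I would instead prove $(\ast)$ with exponent $s$ and a weight depending on $|x|/\|z\|$, and integrate this weight against $\mu\times\mu$ using the finiteness of $I_s(\mu)$.

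For part (2), I take $\dim A>s>m+1$ and $\mu$ as above, and show that $P_{V\sharp}\mu\ll\mathcal L^m$ with $L^2$ density for almost every $V$. Via the lower derivative, this reduces to proving
$$\liminf_{\delta\to0}\iint \delta^{-(m+1)}\gamma_{3,m}\bigl(\{V:\|P_V(p-q)\|\le\delta\}\bigr)\,d\mu p\,d\mu q<\infty .$$
The relevant observation is that a parabolic $\delta$-ball in a non-horizontal $V$ has $\mathcal L^m$-measure comparable to $\delta^{m+1}$. So the same sublevel estimates, now with the exponent $m+1$ and the energy $I_{m+1}(\mu)\le I_s(\mu)<\infty$ in place of $I_s$, give the bound, again with the vertical-dominant pairs handled by the same angular decomposition.
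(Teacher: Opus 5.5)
There is a genuine gap, in the only non-routine case, $m=2$ and $2<\dim A\le 3$. The easy cases are fine: for $m=2$, $\dim A\le 2$ the paper proves a bound of type $(\ast)$ with exponent just below $2$, see \eqref{sigmaest1}, and $m=1$ reduces to Theorem~\ref{Marstrand}.

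Your scheme rests on $(\ast)$ plus the finiteness of $I_s(\mu)$. You correctly see that $(\ast)$ fails for $s>2$, but the repair is only announced (``I would first try\dots''), and your fallback does not work. Your model case $z=(0,t)$ is also misleading. The cap you find consists of nearly horizontal $V$. That loss disappears once you restrict to $V\in G_\epsilon(3,2)$ (normals at fixed distance from the vertical axis and from the horizontal plane), as the paper does; there $\|P_Vz\|\sim|\langle z,v_1\rangle|+|\langle z,v_2\rangle|^{1/2}$.

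The loss that survives comes from differences that are parabolically vertical-dominant but not Euclidean-vertical. For $z=(z',t)$ with $|z'|\sim|t|$, one finds that $\int_{G_\epsilon(3,2)}\|P_Vz\|^{-s}\,d\gamma_{3,2}V$ is of order $|t|^{1-s}$. This exceeds $\|z\|^{-s}\sim|t|^{-s/2}$ by the unbounded factor $|t|^{1-s/2}$. Finiteness of $I_s(\mu)$ gives no control on the $\mu\times\mu$-mass of such pairs, so inserting a weight in $|x|/\|z\|$ and integrating against $I_s(\mu)$ cannot close the argument.

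What is needed, and what the paper supplies, has three parts.
\begin{itemize}
\item[(i)] You need a Frostman measure with $c_\alpha(\mu)\le 1$ for some $s<\alpha<\dim A$ (available by \eqref{upperdensity}, \eqref{k_s2}), not merely finite $s$-energy.
\item[(ii)] You need a two-parameter decomposition of pairs by $d(x,y)\sim 2^{-k}$ and $d_E(x,y)\sim 2^{-k-\ell}$, refined by $|x'-y'|$ when $\ell\approx k$. It must come with area bounds for the set of normals $v\in S^2_{+,\epsilon}$ on which $|\langle x-y,v_1\rangle|\sim 2^{-m_1}$ and $|\langle x-y,v_2\rangle|\sim 2^{-m_2}$.
\item[(iii)] A Euclidean cylinder of radius $2^{-k-\ell}$ and height $2^{-2k}$ is covered by $\lesssim 2^{2\ell}$ parabolic balls of radius $2^{-k-\ell}$. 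Hence it has $\mu$-mass $\lesssim 2^{2\ell-\alpha(k+\ell)}$, and the resulting double sums converge using $2<\alpha<3$ and $s<\alpha$.
\end{itemize}
The paper organizes this through Plancherel, $\widehat{f_s}\lesssim f_{3-s}$, and the boxes $[-2^j,2^j]\times[-2^{2j},2^{2j}]$. Your ``thin parabolic cone plus Frostman'' idea points the same way, and a physical-space version of (i)--(iii) looks feasible. As written, though, the decisive estimate is missing.

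Your route to (2) for $m=2$ has the same hole, since the exponent-$3$ sublevel bound also fails in the vertical-dominant regime. The paper gets (2) in one line: $\dim A>m+1$ gives $\dim_E A>m$ by \eqref{comp2}, and Theorem~\ref{Marstrand} applies.
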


In general dimensions we have

\begin{thm}\label{projplanesint}
Let  $1\leq m \leq n$ and let $A\subset \p^n$ be a Borel set. 
\begin{itemize}
\item[(1)] If $\dim A \leq 2$, then $\dim P_V(A) \geq \dim A$ for almost all $V\in G(n+1,m)$.
\item[(2)] If $\dim_E A \leq 1$, then $\dim P_V(A) =2\dim_E P_V(A) =2\dim_E A \geq \dim A$ for almost all $V\in G(n+1,m)$.
\item[(3)] If $\dim A  > m
+1$, then $\mathcal L^m(P_V(A))>0$ for almost all $V\in G(n+1,m)$. 
\end{itemize}
\end{thm}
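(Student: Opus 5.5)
The plan is to compare the parabolic and Euclidean metrics locally and then apply Marstrand's theorem (Theorem \ref{Marstrand}) and energy estimates. For $z=(x,t)$ with $|z|\le 1$ we have $|t|\ge t^2$, so $|z|\lesssim \|z\|\lesssim |z|^{1/2}$. Hence $\dim_E B\le \dim B\le 2\dim_E B$ for every $B\subset\p^n$, including subsets of a plane $V$. Parts (1) and (3) will follow from part (2) and from a direct $L^2$ tube estimate, respectively.

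\emph{Part (2).} Marstrand gives $\dim_E P_V(A)=\dim_E A$ for a.e.\ $V$. Combined with $\dim P_V(A)\le 2\dim_E P_V(A)$ and $\dim A\le 2\dim_E A$, it only remains to show $\dim P_V(A)\ge 2\dim_E A$ a.e. Take $s<\dim_E A\le 1$ and a Frostman measure $\mu$ on $A$ with finite Euclidean energy $I_s^E(\mu)$. We aim to prove
\[
\int_{G(n+1,m)} \iint \|P_V(p-q)\|^{-2s}\,d\mu p\,d\mu q\,dV<\infty .
\]
Write $e=e_{n+1}$ and $z=p-q$. Since $\|P_Vz\|\ge |P_Vz\cdot e|^{1/2}=|z\cdot P_Ve|^{1/2}$, it suffices to show
\[
\int |w\cdot P_Ve|^{-s}\,dV\lesssim 1 \quad\text{uniformly in unit vectors } w .
\]
This follows from the sublevel estimate $\{V:|w\cdot P_Ve|<\delta\}$ having measure $\lesssim\delta$, valid for $s<1$, since $V\mapsto w\cdot P_Ve$ is a smooth function that vanishes only on a codimension-one set. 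With this, the triple integral is $\lesssim I^E_s(\mu)$. Hence $P_{V\#}\mu$ has finite parabolic $2s$-energy for a.e.\ $V$, and therefore $\dim P_V(A)\ge 2s$.

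\emph{Part (1).} If $\dim_E A\le 1$, the claim is immediate from (2). If $\dim_E A>1$, choose compact $A_k\subset A$ with $\dim_E A_k=1-1/k$. Applying (2) to each $A_k$, and intersecting the countably many full-measure sets of $V$, gives $\dim P_V(A)\ge 2(1-1/k)$ for every $k$. Thus $\dim P_V(A)\ge 2\ge\dim A$.

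\emph{Part (3)} is the main obstacle. The naive Euclidean route fails: $|z|^{-m}$ is only bounded by $\|z\|^{-2m}$, and $s>m+1$ does not give $s>2m$. Take a parabolic Frostman measure $\mu$ on $A$ with $\mu(B(p,r))\le r^s$, $s>m+1$. I would show
\[
\int \liminf_{\delta\to0}\delta^{-m}\,\mu\times\mu\{(p,q):|P_V(p-q)|<\delta\}\,dV<\infty ,
\]
which gives $P_{V\#}\mu\ll\mathcal L^m$ with $L^2$ density for a.e.\ $V$. The Grassmannian input is $|\{V:|P_Vz|<\delta\}|\lesssim\min\{1,(\delta/|z|)^m\}$, so it suffices to bound $\int\min\{1,(\delta/|p-q|)^m\}\,d\mu q\lesssim\delta^m$ uniformly in $p$.

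To do this I decompose $z=q-p$ into parabolic annuli $\|z\|\sim r$ (dyadic $r\le1$), and each annulus further by $|x|\sim\rho\le r$.
\begin{itemize}
\item On the piece $|x|\sim\rho$, $|t|\lesssim r^2$ with $\rho\ge r^2$: cover it by $\sim r^2/\rho^2$ parabolic balls of radius $\rho$, giving $\mu$-mass $\lesssim r^2\rho^{s-2}$. Since $|z|\gtrsim\rho$ there, the contribution is $\lesssim\delta^m r^2\rho^{s-2-m}$.
\item On the piece $|x|\lesssim r^2$: cover by $\sim r^{-2}$ balls of radius $r^2$, giving mass $\lesssim r^{2s-2}$. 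Since $|z|\gtrsim r^2$, the contribution is $\lesssim\delta^m r^{2s-2-2m}$.
\end{itemize}
Because $s-2-m>-1$, summing over dyadic $\rho\in[r^2,r]$ gives at most $\delta^m\max(r^{s-m},r^{2s-2-2m})$. Both exponents are positive since $s>m+1$, so the sum over dyadic $r$ converges. Getting this anisotropic covering bookkeeping right is the step I expect to require the most care.
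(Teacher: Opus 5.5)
\textbf{The gap.} Your part (2), and through it part (1), rests on the estimate $\int_{G(n+1,m)}|\langle w,P_Ve^v\rangle|^{-s}\,d\gamma_{n+1,m}(V)\lesssim 1$ for $s<1$, uniformly in unit vectors $w$. Here $e^v=(0,1)$ is your $e$. The reason you give (``a smooth function vanishing on a codimension-one set'') does not prove this. Such a function may vanish to higher order, and you need sublevel bounds that are uniform over a family of functions whose zero sets degenerate.

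For $m=1$ the estimate is false. With $V=L_u$ you have $\langle w,P_Ve^v\rangle=\langle u,e^v\rangle\langle u,w\rangle$. For $w=e^v$ this equals $\langle u,e^v\rangle^2$, whose $\delta$-sublevel set has measure $\sim\delta^{1/2}$, so the integral diverges for $1/2\le s<1$. This is not caused by discarding the horizontal component. If $\mu$ is length measure on a vertical unit segment, then $I_{2s}(P_{L_u\sharp}\mu)\sim|\langle u,e^v\rangle|^{-2s}$. Hence $\int I_{2s}(P_{L\sharp}\mu)\,d\gamma_{n+1,1}(L)=\infty$ for $s\ge 1/2$, even though (2) holds for this set.

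For $m=n$ your sublevel bound $\lesssim\delta$ is also wrong as stated. Horizontal $w$ and $V=\nu^\perp$ give $-\langle\nu,e^v\rangle\langle\nu,w\rangle$, with sublevel sets of measure $\sim\delta\log(1/\delta)$. This is harmless for $s<1$, but it shows the estimate is not automatic.

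\textbf{What is needed.} You must localize away from horizontal planes and actually prove the bound there; this is where the paper's work goes.
\begin{itemize}
\item On $G_c$, with $V=(W,e)$, one has $P_Ve^v=e_2e$, so $\langle w,P_Ve^v\rangle=e_2\langle P_{W^\perp}w,e\rangle$.
\item Applying \eqref{gamma2} in the $e$-variable gives $\lesssim\min\{1,\delta|P_{W^\perp}w|^{-1}\}$.
\item Since $|P_{W^\perp}w|\ge\max\{|P_{W^\perp\cap H_n}w_x|,|w_t|\}$, Lemma \ref{gamma1} in the $W$-variable gives a sublevel bound uniform in $w$. It is $\lesssim\delta$ for $2\le m<n$ and $\lesssim\delta\log(1/\delta)$ for $m=n$. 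For $m=1$, \eqref{gamma2} alone gives $\lesssim\delta/c$.
\item Conclude with \eqref{G-H} and the mutual absolute continuity of $\lambda_{n+1,m}$ and $\gamma_{n+1,m}$.
\end{itemize}
This is essentially the paper's proof of \eqref{sigmaest}; the vertical condition in \eqref{incl} is exactly your reduction $\|P_Vz\|\ge|\langle z,P_Ve^v\rangle|^{1/2}$. The paper handles $m=1$ separately, via Marstrand and the identity $\dim=2\dim_E$ on non-horizontal lines.

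\textbf{Other parts.} Your derivation of (1) from (2), using compact $A_k\subset A$ with $\dim_E A_k=1-1/k$, is correct. It is neater than the paper's separate parabolic-energy argument.

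Part (3) is correct but roundabout, and your claim that the Euclidean route fails is mistaken. By \eqref{comp2}, $\dim A\le\dim_E A+1$, since a Euclidean $r$-ball is covered by $\sim r^{-1}$ parabolic $r$-balls. So $\dim A>m+1$ forces $\dim_E A>m$, and Marstrand applies directly. Your annular bookkeeping is a measure-level reproof of that covering fact.
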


The cases $\dim A > m+1$ or $m=1$ in both theorems are quite easy, based on Theorem \ref{Marstrand} and the comparison of the Euclidean and parabolic Hausdorff dimensions as given in Section \ref{dimcomp}. The proof of (1) for $m=2, 2<\dim A \leq 3,$ in Theorem \ref{projn=2} uses the Fourier transform. The other cases are done by geometric arguments.

The condition $\dim A  > m+1$ is clearly sharp. We believe $\dim P_V(A) \geq \dim A$ for almost all $V\in G(n+1,m)$ should hold always when $\dim A\leq m+1$. 

%We shall prove
%\begin{thm}\label{projplanesint}
%Let  $1\leq m \leq n$ and let $A\subset \p^n$ be a Borel set. 
%\begin{itemize}
%\item[(1)] If $\dim A \leq 2$, then $\dim P_V(A) \geq \dim A$ for almost all $V\in G(n+1,m)$.
%\item[(2)] If $\dim_E A \leq 1$, then $\dim P_V(A) =2\dim_E P_V(A) =2\dim_E A \geq \dim A$ for almost all $V\in G(n+1,m)$.
%\item[(3)] If $\dim A  > m
%+1$, then $\mathcal L^m(P_V(A))>0$ for almost all $V\in G(n+1,m)$.
%\item[(4)] If $n=2$ then $\dim P_V(A) \geq \dim A$ for %almost all $V\in G(3,m)$, for all $\dim A \leq m+1$. 
%\end{itemize}
%\end{thm}

%The proof of (3) is quite easy based on Theorem \ref{Marstrand} and the comparison of the Euclidean and parabolic Hausdorff dimensions as given in Section \ref{dimcomp}. The proof of (1) and (2) requires a little more work. The proof of (4) uses the Fourier transform.

%The condition $\dim A  > m+1$ in (3) is clearly sharp. We believe $\dim P_V(A) \geq \dim A$ for almost all $V\in G(n+1,m)$ should hold always when $\dim A\leq m+1$. 

Parabolic Hausdorff measures have been used for a long time in connection of parabolic PDE's. Recently parabolic analogs of the quantitative David-Semmes rectifiability, \cite{DS93}, and its connections to harmonic analysis, PDE's and other topics has been under active investigation, see \cite{BHHLN25} and \cite{HJ25} and the references given there. Analogs of the classical qualitative Besicovitch-Federer rectifiability, \cite{Fe69}, were developed in \cite{Mat22} and \cite{MMP22}.

%Huge amount of work has been done for projections and Hausdorff and other fractal dimensions, see Falconer's recent survey \cite{Fa26}.
Much work has been done for projections and Hausdorff and other fractal dimensions, see Falconer's recent survey \cite{Fa26}. Closest to our parabolic case are projection theorems in Heisenberg groups. They were initiated in \cite{BDFMT13} and \cite{BFMT12} and continued in \cite{FH16}, \cite{FO23}, \cite{Ha23}, \cite{Ha25} and \cite{Ha24}. 
%In these papers projections onto homogeneous (invariant under dilations) subspaces were studied. In our parabolic case this is quite easy, see Section \ref{Projections onto homogeneous subspaces}.

\section{Preliminaries}\label{Preli}

The metric $d$ in $\Pn$
is defined as in the Introduction. 
For $A\subset\Pn$,\ $d(A)$ stands for the parabolic diameter of $A$.

We shall denote by $f_{\sharp}\mu$ the push-forward of a measure $\mu$  under a map  $f: f_{\sharp}\mu(A)= \mu(f^{-1}(A))$. 
For $A\subset\Pn$ let $\mathcal M(A)$ be the set of Borel measures $\mu$ with compact $\spt\mu\subset A$ and with $0<\mu(A)<\infty$.

By the notation $a\lesssim b$ we mean that $a\leq Cb$ for some constant $C$, by $a\gtrsim b$ the converse inequality, and by $a\sim b$ that both $a\lesssim b$ and $a\gtrsim b$ hold. The dependence of $C$ should be clear from the context, but sometimes we may specify it writing, for example, $\lesssim_c$. 

We denote by $\mathcal L^n$ the Lebesgue measure in the Euclidean $n$-space $\Rn, n\geq 1,$ and also in $n$-dimensional linear subspaces of higher dimensional Euclidean spaces. $\sigma^{n}$ will stand for the surface measure on the unit sphere $S^{n}\subset \R^{n+1}$ normalized to $\sigma^{n}(S^n)=1$.

Let $\dim$ stand for the parabolic Hausdorff dimension:
\begin{equation*}
\dim A=
\inf\{s>0:\forall \e>0\ \exists B_i\subset\Pn\ \text{with}\ A\subset\cup_{i=1}^{\infty}B_i\ \text{and}\ \sum_{i=1}^{\infty} d(B_i)^s<\e\}.\end{equation*}
As usual, this can also be defined in terms of Hausdorff measures, but we don't need them in this paper. 
Then, $\dim\Pn=n+2$. The Euclidean diameter and Hausdorff dimension will be denoted by $d_E$ and $\dim_E$.

We shall use the Frostman and energy characterizations of Hausdorff dimension.
For $0<s<n+2$ let $k_s$ be the kernel
$$k_s(x,t)=\|(x,t)\|^{-s}=(|x|^2+|t|)^{-s/2},\ (x,t)\in\Pn.$$
Define the growth exponent and the $s$-energy of $\mu\in\mathcal M(\Pn)$ by
\begin{equation*} %\label{mufractal} 
c_{s}(\mu) = \sup_{p \in \mathbb{P}^n, r >0} \frac{ \mu(B(p,r))}{r^{s}}, \end{equation*}
and
$$I_s(\mu)=\iint k_s(p-q)\,d\mu(p)\,d\mu(q)$$

Then for Borel sets $A\subset\Pn$,
\begin{align}\label{upperdensity}    
&\dim A=\sup\{s>0:\exists \mu\in\mathcal M(A)\ \text{such that}\ c_{s}(\mu)<\infty\}\\
\label{k_s1}&=\sup\{s>0:\exists \mu\in\mathcal M(A)\ \text{such that}\ I_s(\mu)<\infty\}\\
\label{k_s2}&=\sup\{s>0:\exists \mu\in\mathcal M(A)\ \text{such that}\ c_{s}(\mu)<\infty\ \text{and}\ I_s(\mu)<\infty\}.\end{align}

For the Euclidean metric \eqref{upperdensity} follows from, for example, \cite[Theorem 2.7]{Mat15}. The proof also works for the parabolic metric. Or one could use the general Howroyd's Frostman lemma, see \cite{Ho95} or \cite[Theorem 8.17]{Mat95}.  Then \eqref{k_s1} and \eqref{k_s2} follow from \cite[Theorem 8.12]{Mat15} and its proof.

%We shall use the energy characterization of Hausdorff dimension. For $0<s<n+2$ define the kernel $$k_s(x,t)=\|(x,t)\|^{-s}=(|x|^2+|t|)^{-s/2},\ (x,t)\in\Pn,$$
%and the $s$-energy of $\mu\in\mathcal M(\Pn)$, $$I_s(\mu)=\iint k_s(p-q)\,d\mu(p)\,d\mu(q).$$
%Then for Borel sets $A\subset\Pn$,
%\begin{equation}\label{k_s}
%\dim A=\sup\{s>0:\exists \mu\in\mathcal M(A)\ \text{such that}\ I_s(\mu)<\infty\}.\end{equation}

%Using Howroyd's Frostman lemma, see \cite{Ho95} or \cite[Theorem 8.17]{Mat95}, exactly the same arguments as in the Euclidean case can be used to verify this, see, for example, the proof of \cite[Theorem 8.9]{Mat95}.

The Euclidean $s$-energy of $\mu\in\mathcal M(\Pn)$ is
$$I_{E,s}(\mu)=\iint |x-y|^{-s}\,d\mu(x)\,d\mu(y).$$

Let $G(n+1,m)$ denote the Grassmannian manifold of the $m$-dimensional (meaning linear dimension) linear subspaces of $\Pn$ equipped with the unique rotationally invariant Borel probability measure $\gamma_{n+1,m}$. Let 
$$H_n=\{(x,t)\in\Pn:t=0\}$$
be the horizontal $n$-plane and 
$$G(H_n,m)=\{V\in G(n+1,m):V\subset H_n\}$$
be the set of horizontal $m$-planes. Then, as is easily checked, for $V\in G(n+1,m)$,\ $\dim V=m$, if $V\in G(H_n,m)$, and $\dim V=m+1$ otherwise. Identifying $H_n$ with $\Rn$ we set $\gamma_{H_n,m}=\gamma_{n,m}$. The uniqueness of $\gamma_{n,m}$ implies
\begin{equation}\label{gammaperp}
 \gamma_{n,m}(G)=\gamma_{n,n-m}(\{V\in G(n,n-m):V^{\perp}\in G\}),\ G\subset G(n,m).   
\end{equation}

The orthogonal projection onto $V\in G(n+1,m)$ is denoted by $P_V$. Note that the projections onto horizontal and vertical (those containing the $t$-axis) planes are Lipschitz and they do not increase the Hausdorff dimension. But this is false for all other planes $V\in G(n+1,m)$; $P_V$ is only H\"older continuous with exponent $1/2$. 

We have for $\delta>0, x\in\Rn,$ see Lemma 3.11 and Corollary 3.12 in \cite{Mat95},
\begin{equation}\label{gamma} 
\gamma_{n,m}(\{V\in G(n,m):|P_V(x)|<\delta\})\lesssim \delta^{m}|x|^{-m},
\end{equation}

and for $0<s<m$,

\begin{equation}\label{gamma3}
\int|P_V(x)|^{-s}\,d\gamma_{n,m}V \lesssim |x|^{-s}.
\end{equation}

For $m=1$, \eqref{gamma} is the same as 

\begin{equation}\label{gamma2} 
\sigma^{n-1}(\{e\in S^{n-1}:|\langle e, x\rangle |<\delta\})\lesssim \delta|x|^{-1}.
\end{equation}

The following two simple lemmas will be used in Section \ref{Projections onto general subspaces}.

\begin{lm}\label{gamma1}
  Let $1\leq k\leq n-1$ and $x\in\Rn$.
  \begin{itemize}
      \item[(1)]
If $0<t< |x|/2$, then
$$\int(|P_V(x)|+t)^{-1}\,d\gamma_{n,k}(V)\lesssim |x|^{-1}\log(|x|/t).$$  
\item[(2)] If $k\geq 2$, then
  $$\int|P_V(x)|^{-1}\,d\gamma_{n,k}(V)\lesssim |x|^{-1}.$$ 
  \end{itemize}
\end{lm}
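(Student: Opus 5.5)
Both parts will be derived from the distribution estimate \eqref{gamma}, applied in the Grassmannian $G(n,k)$ with $m$ replaced by $k$, through the layer-cake formula. By rotation invariance and scaling we may take $|x|=1$. For the case $|x|\ne 1$ in (1), we replace $t$ by $t/|x|$ and observe that the integrand scales as $|x|^{-1}$.

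For (2), write
\[
\int|P_V(x)|^{-1}\,d\gamma_{n,k}(V)=\int_0^\infty \gamma_{n,k}(\{V:|P_V(x)|^{-1}>\lambda\})\,d\lambda .
\]
For $\lambda\le 1$ we bound the integrand by $1$. For $\lambda>1$ the set in question is $\{V: |P_V(x)|<1/\lambda\}$, and \eqref{gamma} gives it measure $\lesssim \lambda^{-k}$. Since $k\ge 2$, the tail integral $\int_1^\infty\lambda^{-k}\,d\lambda$ converges, so the whole integral is $\lesssim 1$. Alternatively, (2) follows directly from \eqref{gamma3} with $s=1<k$.

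For (1), note that $(|P_V(x)|+t)^{-1}\le 1/t$ everywhere, and that $(|P_V(x)|+t)^{-1}>\lambda$ forces $|P_V(x)|<1/\lambda$. The layer-cake formula then gives
\[
\int(|P_V(x)|+t)^{-1}\,d\gamma_{n,k}(V)\le \int_0^{1/t}\gamma_{n,k}(\{V:|P_V(x)|<1/\lambda\})\,d\lambda
\lesssim 1+\int_1^{1/t}\lambda^{-k}\,d\lambda .
\]
Using $k\ge1$, the last integral is at most $\int_1^{1/t}\lambda^{-1}\,d\lambda=\log(1/t)$. Since $t<1/2$, we have $\log(1/t)\ge\log 2$, so the constant $1$ is absorbed and the integral is $\lesssim\log(1/t)$. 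Undoing the normalization turns this into $|x|^{-1}\log(|x|/t)$.

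The only delicate point is keeping track of the normalization $|x|=1$, together with the observation that the hypothesis $t<|x|/2$ is used solely to make $\log(|x|/t)$ bounded below, so that the constant term can be absorbed. Otherwise the argument is a routine application of \eqref{gamma}.
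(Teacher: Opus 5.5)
Your argument is correct and is essentially the paper's proof. In both, after normalizing to $|x|=1$ (with $t$ replaced by $t/|x|<1/2$), the distribution bound \eqref{gamma} is fed into an integral over level sets: the paper uses a dyadic decomposition of the range of $|P_V(v)|$, and you use the continuous layer-cake formula, which is the same computation. Your side remark that (2) is simply the case $s=1<k$ of \eqref{gamma3} is also valid.
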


\begin{proof} To prove (1), set $v=|x|^{-1}x$ and $a=t/|x|<1/2$. Then
    \begin{align*}
\int(|P_V(x)|+t)^{-1} \,d\gamma_{n,k}(V) =
|x|^{-1}\int(|P_V(v)|+a)^{-1} \,d\gamma_{n,k}(V).\end{align*}
 We estimate this by dividing the integration into dyadic pieces. Let $N$ be the smallest integer $j$ for which $2^{j}a>1$, so $N\sim |\log a|=\log(|x|/t)$. Then, by \eqref{gamma},
\begin{align*}
&\int(|P_V(v)|+a)^{-1} \,d\gamma_{n,k}(V)\\
&\sim a^{-1}\gamma_{n,k}(\{V:|P_V(v)|<a\})+\sum_{j=1}^N(2^ja)^{-1}\gamma_{n,k}(\{V:2^{j-1}a\leq |P_V(v)|<2^ja\})\\
 &\lesssim\sum_{j=1}^N(2^ja)^{-1}(2^ja)^k\lesssim N\sim \log(|x|/t).\\
\end{align*}
For (2), we get in the same manner
$$\int|P_V(x)|^{-1} \,d\gamma_{n,k}(V) =
|x|^{-1}\int|P_V(v)|^{-1} \,d\gamma_{n,k}(V),$$
and
\begin{align*}
&\int|P_V(v)|^{-1} \,d\gamma_{n,k}(V) \sim 
 \sum_{j=1}^{\infty}2^{j}\gamma_{n,k}(\{V:2^{-j}< |P_V(v)|\leq 2^{1-j}\})\\
&\lesssim \sum_{j=1}^{\infty}
2^{j}2^{-kj}= \sum_{j=1}^{\infty}
2^{(1-k)j}\sim 1.\\
\end{align*}
\end{proof}

The next lemma holds, and is easy to prove, for all Borel sets $B\subset S^k$. But since we don't need it, we only state the following trivial form.

Write $e=(e_1,e_2)\in \R^k\times\R$ when $e\in S^k$. For $0<c<1$ let
$$S^k_c=\{e\in S^k:|e_{1}|>c, |e_2|>c\},$$
$$C_v=\{e\in S^k_c:e_1/|e_1|=v\},\ v\in S^{k-1}.$$

\begin{lm}\label{spherele}
For any Borel set $B\subset S^k_c$,
$$\sigma^k(B)\sim \int \sigma^1(B\cap C_v)\,d\sigma^{k-1}(v).$$\end{lm}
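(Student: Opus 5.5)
The plan is to use spherical-type coordinates on $S^k_c$ adapted to the splitting $\R^k\times\R$. Every $e\in S^k$ with $e_1\neq 0$ can be written uniquely as
$$e=(\cos\theta\, v,\sin\theta), \qquad v\in S^{k-1},\ \theta\in(-\pi/2,\pi/2).$$
Here $v=e_1/|e_1|$, $\cos\theta=|e_1|$ and $\sin\theta=e_2$. With this parametrization, $C_v$ is exactly the set of points of $S^k_c$ with fixed $v$. It consists of two arcs of the great circle through $(v,0)$ and the north pole $(0,1)$, namely those $\theta$ with $|\sin\theta|>c$ and $\cos\theta>c$.

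The key step is the classical formula for the surface measure in these coordinates. Up to the normalizing constants of $\sigma^k$ and $\sigma^{k-1}$,
$$d\sigma^k(e)=\cos^{k-1}\theta\, d\theta\, d\sigma^{k-1}(v).$$
This follows from the map $(v,\theta)\mapsto(\cos\theta\,v,\sin\theta)$. Its differential is an isometry in the $\theta$ direction and scales the $k-1$ tangent directions of $S^{k-1}$ by $\cos\theta$, and these directions are mutually orthogonal. Hence the Jacobian is $\cos^{k-1}\theta$. For $k=1$ the factor is $1$ and $S^0$ carries counting measure; this case is the trivial identity on the circle. Similarly, $\sigma^1$ restricted to the great circle containing $C_v$ is, up to a constant, $d\theta$, because $\theta$ is arclength along it. So $\sigma^1(B\cap C_v)\sim\mathcal L^1(\{\theta:(\cos\theta\,v,\sin\theta)\in B\})$.

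Applying Fubini to $\chi_B$ then gives
$$\sigma^k(B)\sim\int\!\!\int \chi_B(\cos\theta\,v,\sin\theta)\cos^{k-1}\theta\,d\theta\,d\sigma^{k-1}(v).$$
On $S^k_c$ we have $c<\cos\theta\le 1$, so $c^{k-1}\le\cos^{k-1}\theta\le1$. The weight is therefore comparable to $1$, with constants depending on $c$ and $k$. Consequently the inner integral is comparable to $\sigma^1(B\cap C_v)$, and the lemma follows. Measurability of $v\mapsto\sigma^1(B\cap C_v)$ is part of Fubini's theorem once $B$ is pulled back by the smooth parametrization.

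The only point needing care is the Jacobian computation. Everything else is bookkeeping with the normalizing constants, which are absorbed into $\sim$. The restriction $|e_2|>c$ is not actually needed; only $|e_1|>c$ is used, to keep the weight bounded below.
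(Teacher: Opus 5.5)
Your argument is correct, and it differs from the paper's in execution rather than in the underlying fibration. The paper projects by $\pi(e)=e_1$ onto $\R^k$. This map is bilipschitz on each of the halves $\{e_2>c\}$ and $\{e_2<-c\}$ of $S^k_c$ and on each $C_v$, which is where $|e_2|>c$ enters. So both sides of the lemma become comparable to flat quantities, and polar coordinates in $\R^k$ on $\{|x|>c\}$ finish the job. This is where $|e_1|>c$ enters, to keep the weight $r^{k-1}$ bounded below.

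Your coordinates $(v,\theta)$ are the same ones in disguise, since $\pi(C_v)$ is a radial segment and $r=\cos\theta$. However, you compute the Jacobian on the sphere directly. You get the exact identity $d\sigma^k\propto\cos^{k-1}\theta\,d\theta\,d\sigma^{k-1}(v)$ instead of two-sided bilipschitz comparisons.

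This buys three things. First, your parametrization is injective, so you never split $B$ into upper and lower parts; the paper's $\mathcal L^k(\pi(B))$ must be read half by half, because $\pi$ is two-to-one on $S^k_c$. Second, you see directly that the hypothesis $|e_2|>c$ is superfluous. Third, the inequality $\sigma^k(B)\lesssim\int\sigma^1(B\cap C_v)\,d\sigma^{k-1}(v)$ holds with a constant independent of $c$, because $\cos^{k-1}\theta\le 1$. This is the only direction actually used in the proof of Theorem~\ref{projplanes}.

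The paper's route, in exchange, needs no Jacobian computation on the sphere. It uses only two standard facts: bilipschitz maps distort Hausdorff measure by bounded factors, and polar coordinates in $\R^k$.
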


\begin{proof}
The projection $\pi, \pi(e)= e_1$, is  bilipschitz on the upper and lower halves of $S_c^k$ and $C_v$ for every $v\in S^{k-1}$. Hence $\sigma^k(B)\sim \mathcal L^k(\pi(B))$ and 
$$\int \sigma^1(B\cap C_v)\,d\sigma^{k-1}(v)\sim \int \mathcal L^1(\pi(B\cap C_v))\,d\sigma^{k-1}(v).$$
Here $\int \mathcal L^1(\pi(B\cap C_v))\,d\sigma^{k-1}(v)\sim \mathcal L^k(\pi(B))$ by integration in polar coordinates, because $\pi(B)\subset \{x\in\R^k:|x|>c\}$. The lemma follows.
\end{proof}

\section{Comparison of Euclidean and parabolic Hausdorff dimensions}\label{dimcomp}

Immediately from the definition of the parabolic metric we get for any $p\in\p^n$ with $|p|<1$ and any  $A\subset\p^n$ with $d(A)<1$,
\begin{equation}\label{distcomp}
|p|\leq\|p\|\leq 2\sqrt{|p|},\ d_E(A)\leq d(A)\ \leq 2\sqrt{d_E(A)}.
\end{equation}

We have the following rather easy comparison inequalities:

Let $A\subset\p^n$ with $d(A)<1$. Then,
\begin{equation}\label{comp1}
\dim_EA\leq \dim A\leq 2\dim_EA,\end{equation}
\begin{equation}\label{comp2} \dim_EA\leq \dim A\leq \dim_EA + 1,\end{equation}
\begin{equation}\label{comp3}  2\dim_EA-n\leq \dim A\leq \dim_EA+1.
\end{equation}

\eqref{comp1} is the best of these in the range $\dim_EA\leq 1$, \eqref{comp2} in the range $1\leq \dim_EA\leq n$, and \eqref{comp3} in the range $n\leq \dim_EA\leq n+1$.

\begin{proof}
The first item and the left inequality of the second item follow immediately from \eqref{distcomp}. The right hand inequalities in \eqref{comp2} and \eqref{comp3} follow since any Euclidean ball with radius $r<1$ can be covered with roughly $1/r$ parabolic balls with radius $r$. Finally, the left hand inequality in \eqref{comp3} follows since any parabolic ball with radius $r$ can be covered with roughly $r^{-n}$ Euclidean balls with radius $r^2$. 
\end{proof}

It is easy to find examples showing that these bounds are sharp.

If $V\in G(n+1,m)\setminus G(H_n,m)$, then the orthogonal projection from $V$ onto the vertical plane $\{(x,t):x\in V\cap H_n, t\in\R\}$ is both Euclidean and parabolic bi-Lipschitz. Hence the inequalities \eqref{comp1}, \eqref{comp2} and \eqref{comp3}, with $n$ replaced by $m-1$, are valid for subsets of $V$.

Similar, but much more difficult, comparison results were proved by Balogh, Rickly and Serra Cassano in the first Heisenberg group, \cite{BRS03}, and by Balogh, Tyson and Warhurst in general Carnot groups, \cite{BTW09}.

\section{Projections onto homogeneous linear subspaces }\label{Projections onto homogeneous subspaces}

In this section we discuss the easy case of projections onto homogeneous subspaces. They are the $m$-planes $V\in G(n+1,m)$ which are invariant under the dilations $(x,t)\mapsto (rx,r^2t), (x,t)\in\Pn, r>0$. These are exactly the horizontal planes $V\subset H_n$ and their orthogonal complements. The latter are the vertical planes; those containing the $t$-axis $\{(0,t):t\in\R\}$. 

The projections onto homogeneous linear subspaces are Lipschitz maps and do not increase the parabolic Hausdorff dimension. The vertical $m$-planes planes are given by $V_W=W+T, W\in G(H_n,m-1), T=\{(0,t):t\in\R\}$. Then $\dim V_W=m+1$. Let $e^v=(0,1)\in\Pn.$ For $p=(x,t)$ the projection is given by
$$P_{V_W}:\p^n\to V_W, P_{V_W}(p)=P_W(p)+\langle e^v, p\rangle e^v=(P_W(x),t).$$
Let 
$$\pi:\p^n\to H_n, \pi(x,t)=x,$$
be the projection onto the horizontal plane.

Let  $A\subset\p^n$. For any homogeneous $U\in G(n+1,k)$,\ $A\subset P_U(A)+ U^{\perp}$. Using this, it follows easily from the definition of the parabolic Hausdorff dimension that $\dim P_U(A) \geq \dim A - \dim U^{\perp}.$ This gives for all $W\in G(H_n,m)$,
$$\dim P_W(A) \geq \dim A +m- n-2,$$
$$\dim P_{V_W}(A) \geq \dim A + m - n,$$
$$\dim \pi(A) \geq \dim A - 2.$$
All these inequalities are sharp, but we can improve them for almost all $W$.
\begin{thm}\label{projhom1}
Let $A\subset \p^n$ be a Borel set. \\
(1) If $\dim A \leq m+2$, then $\dim P_V(A)\geq \dim A - 2$ for almost all $V\in G(H_n,m)$.\\
(2) If $\dim A > m+2$, then $\mathcal L^m(P_V(A))>0$ for almost all $V\in G(H_n,m)$.
\end{thm}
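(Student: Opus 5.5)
Since every $V\in G(H_n,m)$ lies in $H_n$, we have $P_V=P_V\circ\pi$, where $\pi(x,t)=x$. We also have $\dim P_V(A)=\dim_E P_V(A)$, because on $H_n$ the parabolic metric coincides with the Euclidean one. So the plan is to reduce everything to Marstrand's Theorem \ref{Marstrand} applied to the Euclidean set $\pi(A)\subset\Rn$, with $G(H_n,m)$ identified with $G(n,m)$ and $\gamma_{H_n,m}=\gamma_{n,m}$. Note that $\pi(A)$ is analytic, being a continuous image of a Borel set, and Theorem \ref{Marstrand} holds for analytic sets. Alternatively, we can apply the energy argument directly to the push-forward $\pi_\sharp\mu$ of a Frostman measure $\mu$ on $A$, which avoids measurability issues altogether.

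The key step is the dimension inequality
$$\dim_E \pi(A)\geq \dim A-2.$$
This is the inequality $\dim\pi(A)\ge\dim A-2$ already stated above, since $\dim\pi(A)=\dim_E\pi(A)$ on $H_n$. To verify it: if $\pi(A)\subset\bigcup B_i$ with $d_E(B_i)=r_i<1$, then $A\subset\bigcup (B_i\times\R)$. Restricting $A$ to a bounded time interval, say $|t|\le 1$ (which suffices by countable stability), each $B_i\times[-1,1]$ is covered by $\sim r_i^{-2}$ parabolic balls of radius $\sim r_i$. Hence $\sum r_i^{-2}r_i^{s+2}=\sum r_i^{s}$ small gives $\mathcal H^{s+2}(A)=0$, so $\dim A\le\dim_E\pi(A)+2$. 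The measure version works the same way: if $\mu$ on $A$ satisfies $c_s(\mu)<\infty$, then the push-forward satisfies $\pi_\sharp\mu(B(x,r))\le\mu(B(x,r)\times[-1,1])\lesssim r^{-2}r^{s}$. So $c_{s-2}(\pi_\sharp\mu)<\infty$ in the Euclidean sense.

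Now the two parts follow.

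\textbf{Part (1).} Let $s=\dim A\le m+2$, so $\dim_E\pi(A)\ge s-2$.
\begin{itemize}
\item If $\dim_E\pi(A)\le m$, Theorem \ref{Marstrand}(1) gives $\dim_E P_V(\pi(A))=\dim_E\pi(A)\ge s-2$ for almost all $V$.
\item If $\dim_E\pi(A)>m$, Theorem \ref{Marstrand}(2) gives $\mathcal L^m(P_V(\pi(A)))>0$, hence $\dim P_V(A)=m\ge s-2$ for almost all $V$.
\end{itemize}

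\textbf{Part (2).} If $\dim A>m+2$, then $\dim_E\pi(A)>m$, and Theorem \ref{Marstrand}(2) yields $\mathcal L^m(P_V(A))=\mathcal L^m(P_V(\pi(A)))>0$ for almost all $V\in G(H_n,m)$.

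The only mild obstacle is the covering step for $\pi(A)$ when $A$ is unbounded in $t$. This is handled by writing $A=\bigcup_k A\cap\{|t|\le k\}$ and using countable stability of $\dim$, then choosing $k$ so that $\dim(A\cap\{|t|\le k\})$ is arbitrarily close to $\dim A$.
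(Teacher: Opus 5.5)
Your proof is correct and is the paper's argument: write $P_V=P_V\circ\pi$ for horizontal $V$, use $\dim_E\pi(A)=\dim\pi(A)\ge\dim A-2$, and apply Marstrand's Theorem~\ref{Marstrand} to $\pi(A)$. Your covering proof of that inequality (the paper only asserts it via $A\subset P_U(A)+U^{\perp}$), your case split in (1), and your note that $\pi(A)$ is analytic fill in details the paper leaves implicit.
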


\begin{proof}
Clearly, $P_V(A) = P_V(\pi(A))$ for $V\in G(H_n,m)$. Since $\dim \pi(A) \geq \dim A - 2,$ the result follows applying the Euclidean projection theorem \ref{Marstrand} to $\pi(A)$.
\end{proof}

Examples of the form $A=B\times\R$ show that $\dim A-2$ is sharp.

\begin{thm}\label{projhom2}
Let $A\subset \p^n$ be a Borel set. 
\begin{itemize}
\item[(1)] If $\dim A \leq m$, then $\dim P_{V_W}(A) \geq \dim A$ for almost all $W\in G(H_n,m)$.
\item[(2)] If $m\leq\dim A  \leq n$, then $\dim P_{V_W}(A) \geq m$ for almost all $W\in G(H_n,m)$.
\item[(3)] If $\dim A \geq n$, then $\dim P_{V_W}(A) \geq \dim A+m-n$ for all $W\in G(H_n,m)$.
\end{itemize}
\end{thm}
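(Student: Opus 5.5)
The plan is to prove parts (1) and (2) together by a parabolic version of Kaufman's energy argument, and to obtain (3) from the trivial bound already noted above. The key reduction is this claim: for every $0<s<m$ and every $\mu\in\mathcal M(A)$ with $I_s(\mu)<\infty$, we have $I_s(P_{V_W\sharp}\mu)<\infty$ for $\gamma_{n,m}$-almost all $W\in G(H_n,m)$.

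Granting the claim, the three parts follow quickly.
\begin{itemize}
\item[(1)] If $\dim A\le m$, take any $s<\dim A$. By \eqref{k_s1} there is a $\mu$ as in the claim. Since $P_{V_W\sharp}\mu\in\mathcal M(P_{V_W}(A))$ and the parabolic metric on $V_W$ is the restriction of $d$, \eqref{k_s1} gives $\dim P_{V_W}(A)\ge s$ almost surely. Letting $s\uparrow\dim A$ along a countable sequence gives (1).
\item[(2)] If $\dim A\ge m$, then every $s<m$ is below $\dim A$, so the same argument gives $\dim P_{V_W}(A)\ge s$ for all $s<m$, hence $\ge m$ almost surely.
\item[(3)] Here $V_W$ is homogeneous and $V_W^\perp=W^\perp\cap H_n$ is a horizontal $(n-m)$-plane, so $\dim V_W^\perp=n-m$. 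The inequality $\dim P_U(A)\ge\dim A-\dim U^\perp$ from the discussion preceding the theorem then gives $\dim P_{V_W}(A)\ge \dim A+m-n$ for every $W$. No genericity is needed.
\end{itemize}

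To prove the claim, write $p=(x,t)$ and $q=(y,u)$, and set $z=x-y$, $\tau=|t-u|$. Since $P_{V_W}(p)=(P_W(x),t)$,
$$\|P_{V_W}(p)-P_{V_W}(q)\|^{-s}=(|P_W(z)|^2+\tau)^{-s/2}.$$
By Fubini,
$$\int I_s(P_{V_W\sharp}\mu)\,d\gamma_{n,m}W=\iint\int(|P_W(z)|^2+\tau)^{-s/2}\,d\gamma_{n,m}W\,d\mu p\,d\mu q.$$
It therefore suffices to prove the pointwise estimate
$$\int(|P_W(z)|^2+\tau)^{-s/2}\,d\gamma_{n,m}W\lesssim(|z|^2+\tau)^{-s/2}=\|p-q\|^{-s},\qquad 0<s<m.$$
Integrating this against $\mu\times\mu$ bounds the left-hand side by a constant times $I_s(\mu)<\infty$. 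Hence $I_s(P_{V_W\sharp}\mu)<\infty$ for almost every $W$.

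The pointwise estimate is the only real step, and I do it in two cases.
\begin{itemize}
\item If $\tau\ge|z|^2$, the integrand is at most $\tau^{-s/2}\le 2^{s/2}(|z|^2+\tau)^{-s/2}$.
\item If $\tau<|z|^2$, drop $\tau$ in the integrand and apply \eqref{gamma3} in $H_n\cong\Rn$ with $m$-planes and $s<m$. This gives the bound $\lesssim|z|^{-s}\lesssim(|z|^2+\tau)^{-s/2}$.
\end{itemize}
The only point requiring care is that \eqref{gamma3} needs $s<m$ strictly. This is exactly why (1) is stated only for $\dim A\le m$ and why (2) yields only the bound $m$. The case $z=0$, i.e.\ pure vertical differences, is covered by the first case, since then the integrand does not depend on $W$.
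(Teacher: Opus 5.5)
Your proposal is correct and follows essentially the same route as the paper. Both use Kaufman's energy argument with the pointwise bound $\int(|P_W(x)|^2+|t|)^{-s/2}\,d\gamma_{H_n,m}(W)\lesssim\|(x,t)\|^{-s}$ for $s<m$, derived from \eqref{gamma3}; the paper phrases your two cases as a single minimum $\min\{|x|^{-s},|t|^{-s/2}\}\sim\|(x,t)\|^{-s}$, obtains (2) by the same argument, and gets (3) from the uniform bound $\dim P_U(A)\ge\dim A-\dim U^{\perp}$ exactly as you do.
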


\begin{proof}
To prove (1), let $0<s<\dim A \leq m$. By \eqref{k_s1} there is $\mu\in\mathcal M(A)$ with $I_s(\mu)<\infty$. 
For any $(x,t)\in \p^n$ we have
\begin{align*}
&\int\|P_{V_W}(x,t)\|^{-s}\,d\gamma_{H_n,m}(W)=\int (|P_W(x)|^2+|t|)^{-s/2}\,d\gamma_{H_n,m}(W)\\
&\leq \int \min\{(|P_W(x)|^{-s},|t|^{-s/2}\}\,d\gamma_{H_n,m}(W)\leq \min\{\int |P_W(x)|^{-s}\,d\gamma_{H_n,m}(W),|t|^{-s/2}\}\\ 
&\lesssim \min\{|x|^{-s}, |t|^{-s/2}\} \sim \|(x,t)\|^{-s},
\end{align*}
where we used \eqref{gamma3} to get the second to last estimate. This implies 
\begin{align*}
&\int I_s(P_{{V_W}\sharp}\mu)\,d\gamma_{H_n,m}(W) = \iiint\|P_{V_W}(q-p)\|^{-s}\,d\mu(p)\,d\mu(q)\,d\gamma_{H_n,m}(W)\\
&\lesssim \iint\|q-p\|^{-s}\,d\mu(p)\,d\mu(q)\ = I_s(\mu)<\infty,\end{align*}
from which (1) follows by \eqref{k_s1}.

Item (2) is trivial by (1) and item (3) is the uniform estimate already mentioned above.
\end{proof}

The above estimates are sharp:

(1) and (2): For any $A\subset H_n$, $\dim P_{V_W}(A) \leq \min\{\dim A,m\}$ for all $W\in G(H_n,m)$, so (1) and (2) are sharp.

(3): Take $A=H_n\times B, B\subset\R$. Then $\dim A = n+\dim B, P_{V_W}(A)=W\times B$ and $\dim P_{V_W}(A) = m+\dim B= \dim A+m-n$, whence (3) is sharp. Taking $B$ with $\dim B=2$ and $\mathcal L^1(B)=0$, we get $A$ with $\dim A=n+2$, but $\mathcal L^1(P_{V_W}(A))=0$ for  all $W\in G(H_n,m)$.

In Heisenberg groups, and more general homogeneous groups,  projections defined via the group structure onto homogeneous subgroups are more natural than the orthogonal projections onto general linear subspaces. Consequently, they have recently been studied by several authors, see \cite{BDFMT13},  \cite{BFMT12}, \cite{FH16}, \cite{FO23}, \cite{Ha23}, \cite{Ha25} and \cite{Ha24}. In Heisenberg groups these projections are much more complicated than in the above very easy parabolic case. So the main interest of the questions studied in this paper is not about homogeneous groups but about geometric measure theory in $\R^{n+1}$ with a non-Euclidean metric, in particular, behavior of parabolic Hausdorff dimension under general orthogonal projections.

\section{Projections onto general linear subspaces}\label{Projections onto general subspaces}

Next we consider projections onto general lines and planes in $\p^n$.
Recall that for $V\in G(n+1,m)$, $\dim =m$, if $V\subset H_n$, and $\dim V=m+1$, otherwise. For non-horizontal lines $L$ we also have $\dim B = 2\dim_EB$ for any $B\subset L$.
Here is first the easy case $m=1$ of Theorem \ref{projplanesint}:

\begin{thm}\label{projlines}
Let $A\subset \p^n$ be a Borel set. 
\begin{itemize}
\item[(1)] If $\dim A \leq 2$, then $\dim P_L(A) \geq \dim A$ for almost all $L\in G(n+1,1)$.
\item[(2)] If $\dim A  > 2$, then $\mathcal L^1(P_L(A))>0$ for almost all $L\in G(n+1,1)$.
\end{itemize}
\end{thm}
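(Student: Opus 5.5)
The plan is to reduce both parts to Marstrand's Theorem \ref{Marstrand} for lines, via the comparison inequalities of Section \ref{dimcomp}. Two facts drive this. First, for almost every line $L\in G(n+1,1)$, namely every non-horizontal one, subsets $B\subset L$ satisfy $\dim B=2\dim_E B$. Second, for any $A$ we have $\dim A\le 2\dim_E A$ by \eqref{comp1}, after splitting $A$ into countably many pieces of parabolic diameter $<1$. Hausdorff dimension is countably stable and projections commute with countable unions, so this localization costs nothing.

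For part (1), assume $\dim A\le 2$ and set $s=\dim A$. By \eqref{comp1}, $\dim_E A\ge s/2$. If $\dim_E A\le 1$, Theorem \ref{Marstrand}(1) in the Euclidean space $\R^{n+1}$ gives $\dim_E P_L(A)=\dim_E A$ for almost all $L$. If $\dim_E A>1$, Theorem \ref{Marstrand}(2) gives $\mathcal L^1(P_L(A))>0$, so $\dim_E P_L(A)=1$ for almost all $L$. In either case $\dim_E P_L(A)\ge\min\{\dim_E A,1\}\ge s/2$ for almost all $L$.

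Since the horizontal lines $G(H_n,1)$ form a $\gamma_{n+1,1}$-null set, for almost all $L$ we may use $\dim P_L(A)=2\dim_E P_L(A)\ge s=\dim A$. This is exactly (1).

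For part (2), assume $\dim A>2$. By \eqref{comp1}, $\dim_E A\ge \dim A/2>1$, and Theorem \ref{Marstrand}(2) in $\R^{n+1}$ then yields $\mathcal L^1(P_L(A))>0$ for almost all $L\in G(n+1,1)$. The Lebesgue measure on $L$ is the same whichever metric is used on the ambient space, so nothing further is needed.

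The only step requiring care is the identity $\dim B=2\dim_E B$ for $B$ in a non-horizontal line $L$. Parametrize $L$ by $u\mapsto u(a,b)$ with $b\ne0$. Then
$$\|u(a,b)-u'(a,b)\|^2=|u-u'|^2|a|^2+|u-u'||b|,$$
which is comparable to $|u-u'|\,|b|$ for $|u-u'|\le1$. Hence $d$ restricted to $L$ is locally bi-Lipschitz to the square root of the Euclidean metric, and the identity follows.

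The other point to check is the localization needed to apply \eqref{comp1}, which is only stated for sets of parabolic diameter less than $1$. Countable stability of both dimensions and of projections handles this.
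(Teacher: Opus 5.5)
Your proposal is correct and follows the paper's proof: Marstrand's theorem in $\R^{n+1}$ combined with \eqref{comp1} and the identity $\dim B=2\dim_E B$ for subsets of non-horizontal lines. The extra details you supply, namely the localization to pieces of parabolic diameter $<1$ and the explicit check that $d$ on a non-horizontal line is locally comparable to the square root of the Euclidean distance, are exactly what the paper leaves implicit.
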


\begin{proof} 
By the Euclidean projection theorem \ref{Marstrand} and the dimension comparison \eqref{comp1}, 
we have for almost all lines $L$,
$$\dim P_L(A)=2\dim_E P_L(A)=\min\{2\dim_EA,2\}\geq \dim A,$$
which gives (1).

If $\dim A > 2$, then $\dim_EA \geq \dim A/2>1$ and again (2) follows by the Euclidean projection theorem. 
\end{proof}

The bounds in Theorem \ref{projlines} are sharp; consider subsets $A$ of lines. 

Let $m>1$. Almost every $V\in G(n+1,m)$ intersects the horizontal plane $H_n$ in an $(m-1)$-plane $W\in G(H_n,m-1)$. So, it is uniquely (up to identifying $e$ and $-e$) given as the plane spanned by $W$ and $ L_e=\{ue:u\in\R\}, e\in W^{\perp}\cap S^n$. Here and later the orthogonal complement is taken in $\R^{n+1}$, whence $W^{\perp}$ is an $(n-m+2)$-dimensional vertical plane. We then denote $V=(W,e)$ and  $P_{V}=P_{W,e}$. Let 
$$G=\{(W,e):W \in G(H_n,m-1), e\in W^{\perp}\cap S^n\}\subset G(n+1,m),$$
and let $\lambda_{n+1,m}$ be the Borel probability measure on $G$, defined by
\begin{equation}\label{lambda}
    \lambda_{n+1,m}(E)=\int\sigma^{n-m+1}(\{e\in W^{\perp}\cap S^n:(W,e)\in E\})\,d\gamma_{H_n,m-1}(W)\end{equation}
for Borel sets $E\subset G$. 

For $0<c<1$, writing $e=(e_1,e_2)\in (H_n\times \R)\cap S^n$, let 
$$G_c=\{(W,e)\in G: |e_1|>c, |e_2|>c\}.$$
Then 
\begin{equation}\label{G-H}
\gamma_{n+1,m}(G(n+1,m)\setminus\bigcup_{i=1}^{\infty}G_{1/i})=0.
\end{equation}

The measures $\lambda_{n+1,m}$ and $\gamma_{n+1,m}$ are mutually absolutely continuous:

\begin{lm}
    For Borel sets $E\subset G$,\ $\lambda_{n+1,m}(E)=0$ if and only is $\gamma_{n+1,m}(E)=0.$
\end{lm}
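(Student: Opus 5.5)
The plan is to realize both measures as images of natural measures under smooth maps, and then use the fact that smooth submersions between manifolds preserve null sets in both directions. Consider the manifold
$$\mathcal F=\{(W,e):W\in G(H_n,m-1),\ e\in W^{\perp}\cap S^n\},$$
a smooth fibre bundle over $G(H_n,m-1)$ with fibres the spheres $W^{\perp}\cap S^n\cong S^{n-m+1}$. By \eqref{lambda}, $\lambda_{n+1,m}$ is the measure on $\mathcal F$ obtained by integrating the fibre measures $\sigma^{n-m+1}$ against the smooth measure $\gamma_{H_n,m-1}$ on the base. In local trivializations, which can be given by rotations of $H_n$ fixing the $t$-axis, this is comparable to Lebesgue measure. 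So $\lambda_{n+1,m}$ is, after pushing forward by $\Phi(W,e)=\spn(W\cup\{e\})$, a measure with a smooth positive density with respect to the Riemannian volume of $\mathcal F$.

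The map $\Phi:\mathcal F\to G(n+1,m)$ is two-to-one, since $(W,e)$ and $(W,-e)$ give the same plane. Its image is the set $G'=\{V:\dim(V\cap H_n)=m-1\}$, which is open and of full $\gamma_{n+1,m}$-measure; its complement $\{V\subset H_n\}$ is a lower-dimensional submanifold. I will check that $\Phi$ is a local diffeomorphism onto $G'$. The inverse on $G'$ is $V\mapsto (V\cap H_n,\ \pm\text{unit vector of }V\ominus(V\cap H_n))$. Intersection with $H_n$ depends smoothly on $V$ on the open set where $V$ and $H_n$ are transversal, and the unit normal of $V\cap H_n$ inside $V$ is then also smooth. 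The dimensions match: $\dim G(n,m-1)+(n-m+1)=(m-1)(n-m+1)+(n-m+1)=m(n+1-m)=\dim G(n+1,m)$. Hence $\Phi$ is a smooth double cover of $G'$.

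Since $\gamma_{n+1,m}$ is the rotation-invariant measure, it has a smooth positive density with respect to the Riemannian volume on $G(n+1,m)$. Smooth diffeomorphisms map Lebesgue-null sets to null sets in both directions. Therefore, locally and hence globally by countably many charts, $\Phi_\sharp$ of the fibre-integral measure and $\gamma_{n+1,m}$ restricted to $G'$ have the same null sets. Identifying $E\subset G$ with its preimage under $\Phi$ (both lifts $\pm e$ are included, and the fibre measure is symmetric) gives the claim, since $\gamma_{n+1,m}(G(n+1,m)\setminus G')=0$.

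The main obstacle is verifying cleanly that $\lambda_{n+1,m}$ is locally comparable to Lebesgue measure in coordinates on $\mathcal F$, i.e.\ that the disintegration \eqref{lambda} has a smooth positive density. I would do this via a local section $W\mapsto g_W\in O(n)$ with $W=g_W W_0$, which identifies $\mathcal F$ locally with $U\times (W_0^\perp\cap S^n)$ by $(W,e)\mapsto (W,g_W^{-1}e)$. Since rotations fixing the $t$-axis preserve $\sigma^{n-m+1}$, the measure becomes the product $\gamma_{H_n,m-1}|_U\times\sigma^{n-m+1}$, which is smooth.
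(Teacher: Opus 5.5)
Your argument is correct in substance, but it takes a different route from the paper. The paper argues metrically. On each $G_c$ it asserts that the Grassmannian metric $d_{n+1,m}$ is locally comparable to $d'((W,e),(W',e'))=d_{n,m-1}(W,W')+d_{n+1,1}(L_e,L_{e'})$. It then compares ball growth, $\gamma_{n+1,m}(B(V,r))\sim r^{m(n+1-m)}$ and $\lambda_{n+1,m}(B'((W,e),r))\sim r^{m(n+1-m)}$. This makes both measures comparable to $m(n+1-m)$-dimensional Hausdorff measure on $G_c$ (the density-theorem step is left implicit), and the paper concludes with \eqref{G-H}.

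You instead use the smooth structure: the parametrization is a smooth double cover of the non-horizontal planes, both measures have smooth positive densities, and diffeomorphisms preserve null sets. Your route gives more, namely a smooth, locally bounded Radon--Nikodym derivative on the whole open set of non-horizontal planes, including near vertical ones, so the restriction $|e_1|>c$ is unnecessary. It also replaces the paper's ``easily seen'' metric comparison with standard facts: smooth dependence of transverse intersections, local sections of $O(n)\to G(n,m-1)$, and smoothness of invariant measures. The paper's route, in turn, stays within the ball-measure and covering toolkit used elsewhere in the paper.

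One slip needs fixing. On all of $\mathcal F$, the map $\Phi$ is not two-to-one onto $G'$. If $e\in W^\perp\cap H_n$ (that is, $e_2=0$), then $\spn(W\cup\{e\})$ is horizontal, so $\Phi$ hits $G(H_n,m)$, and each horizontal plane has a whole family of preimages. Restrict to $\mathcal F'=\{(W,e)\in\mathcal F: e_2\neq 0\}$:
\begin{itemize}
\item its complement meets each fibre in a great subsphere of codimension one, so it is $\lambda_{n+1,m}$-null;
\item that complement maps into the $\gamma_{n+1,m}$-null set $G(H_n,m)$;
\item on $\mathcal F'$ your inverse works as described, since for $V\in G'$ the normal line of $V\cap H_n$ inside $V$ is never horizontal. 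Choosing $e_2>0$ even gives a global smooth inverse on $G'$.
\end{itemize}
Also, the sentence saying $\lambda_{n+1,m}$ ``after pushing forward by $\Phi$'' has a smooth density on $\mathcal F$ has the direction reversed. It is the fibre-integral measure on $\mathcal F$ that has the smooth density, and $\lambda_{n+1,m}$ is its push-forward under $\Phi$.
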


\begin{proof}
    Let $d_{n+1,m}$ be the standard metric on $G(n+1,m)$ given by  
    $$d_{n+1,m}(V,V')=\sup\{d_E(x,V'):x\in V\cap S^n\}.$$ 
    Then $d_{n+1,m}$ is easily seen to be locally equivalent with the  metric
    $$d'((W,e),(W',e'))=d_{n.m-1}(W,W')+d_{n+1,1}(L_e,L_{e'})$$ 
    on the open sets $G_c, 0<c<1$. The dimension of $G(n+1,m)$ is $m(n+1-m)$ and $\gamma_{n+1,m}(B(V,r))\sim r^{m(n+1-m)}$ for $V\in G(n+1,m), 0<r<1$, see, for example, \cite[3.2.28(2)]{Fe69}. Here balls are defined with the metric $d_{n+1,m}$.  For the corresponding $d'$ balls the definition of $\lambda_{n+1,m}$ gives $\lambda_{n+1,m}(B'((W,e),r))\sim  r^{m(n+1-m)}$ for $(W,e)\in G_c$ and for small $r>0$. This yields the lemma.
\end{proof}

Here is Theorem \ref{projplanesint} for $m>1$:

\begin{thm}\label{projplanes}
Let  $2\leq m \leq n$ and let $A\subset \p^n$ be a Borel set. 
\begin{itemize}
\item[(1)] If $\dim A \leq 2$, then $\dim P_V(A) \geq \dim A$ for almost all $V\in G(n+1,m)$.
\item[(2)] If $\dim_E A \leq 1$, then $\dim P_V(A) =2\dim_E P_V(A) =2\dim_E A \geq \dim A$ for almost all $V\in G(n+1,m)$.
\item[(3)] If $\dim A  > m
+1$, then $\mathcal L^m(P_V(A))>0$ for almost all $V\in G(n+1,m)$.
\end{itemize}
\end{thm}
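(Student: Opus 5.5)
For (3) and (2) I would reduce to the Euclidean theory, and for (1) I would use an energy estimate averaged over the measure $\lambda_{n+1,m}$ of \eqref{lambda}. Since $\lambda_{n+1,m}$ and $\gamma_{n+1,m}$ have the same null sets, and \eqref{G-H} holds, it is enough to prove each almost-everywhere statement for $\lambda_{n+1,m}$-almost all $(W,e)\in G_c$, with $0<c<1$ fixed. We may also assume $d(A)<1$ and that all measures have support of diameter $<1$.

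\emph{Item (3).} If $\dim A>m+1$, then \eqref{comp2} gives $\dim_E A\geq \dim A-1>m$. Part (2) of Theorem \ref{Marstrand} then yields $\mathcal L^m(P_V(A))>0$ for almost all $V$.

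\emph{The basic pointwise bound.} For $p=(x,t)$ and $V=(W,e)\in G_c$, write $e=(e_1,e_2)$. Then
$$P_V(p)=P_W(x)+\langle e,p\rangle e .$$
Its horizontal part is $P_W(x)+\langle e,p\rangle e_1$. Since $e_1\perp W$, this part has length at least $|P_W(x)|$. Its vertical part is $\langle e,p\rangle e_2$, of absolute value at least $c|\langle e,p\rangle|$. Hence
$$\|P_V(p)\|^2\gtrsim_c |P_W(x)|^2+|\langle e,p\rangle| .$$
Note that $\langle e,p\rangle=\langle e,q\rangle$ with $q=P_{W^\perp}(p)=(x-P_W(x),t)$.

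\emph{Item (2).} Take $s<\dim_E A\leq 1$ and $\mu\in\mathcal M(A)$ with $I_{E,s}(\mu)<\infty$. The pointwise bound gives $\|P_V(p)\|^{-2s}\lesssim |\langle e,p\rangle|^{-s}$. On $G_c$ the $e$-marginal of $\lambda_{n+1,m}$ is bounded by a constant times $\sigma^n$. (This follows from \eqref{lambda} together with rotation invariance; alternatively one can argue as in Lemma \ref{spherele}.) Since $s<1$, the estimate \eqref{gamma3} with $m=1$, or equivalently \eqref{gamma2} summed dyadically, gives
$$\int_{G_c}\|P_V(p)\|^{-2s}\,d\lambda_{n+1,m}\lesssim |p|^{-s}.$$
Integrating against $\mu\times\mu$ shows that $I_{2s}(P_{V\sharp}\mu)<\infty$ for almost all $V$. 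By \eqref{k_s1} this gives $\dim P_V(A)\geq 2\dim_E A$. The reverse inequalities come from two facts. First, \eqref{comp1} holds inside non-horizontal $V$, which gives $\dim P_V(A)\leq 2\dim_E P_V(A)$. Second, $\dim_E P_V(A)\le\dim_E A$ because $P_V$ is Euclidean Lipschitz. Finally, $2\dim_E A\ge \dim A$ is again \eqref{comp1}.

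\emph{Item (1), the main step.} Take $s<\dim A\leq 2$ and $\mu\in\mathcal M(A)$ with $I_s(\mu)<\infty$. The goal is
$$\int_{G_c}\|P_V(p)\|^{-s}\,d\lambda_{n+1,m}\lesssim\|p\|^{-s},$$
after which the conclusion follows exactly as in Theorem \ref{projhom2}(1). I would first fix $W$ and integrate in $e\in W^\perp\cap S^n$. This sphere has dimension $n-m+1\geq 1$ and $s/2<1$, so a dyadic decomposition as in Lemma \ref{gamma1}, using \eqref{gamma2} in $W^\perp$, gives
$$\int\bigl(|P_W(x)|^2+|\langle e,q\rangle|\bigr)^{-s/2}\,d\sigma^{n-m+1}(e)\lesssim \min\bigl\{|P_W(x)|^{-s},\,|q|^{-s/2}\bigr\}.$$
Here $|q|\gtrsim |t|+|x-P_W(x)|$. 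Then I would split into two cases.
\begin{itemize}
\item If $|t|\geq|x|^2$, the bound $|t|^{-s/2}\sim\|p\|^{-s}$ holds for every $W$.
\item If $|t|<|x|^2$, then for each $W$ either $|P_W(x)|\geq|x|/2$, giving $|P_W(x)|^{-s}\lesssim|x|^{-s}$, or $|x-P_W(x)|\geq|x|/2$, giving $|q|^{-s/2}\lesssim|x|^{-s/2}\leq|x|^{-s}$ because $|x|<1$. In both sub-cases the bound is $\lesssim|x|^{-s}\sim\|p\|^{-s}$, uniformly in $W$.
\end{itemize}

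The step I expect to need the most care is the $e$-integral in item (1). One must handle the mixture of the quadratic term $|P_W(x)|^2$ and the linear term $|\langle e,q\rangle|$. One must also check that the constants are uniform on $G_c$, where $e$ ranges over a sphere whose dimension may be as small as $1$. The case $s/2<1$ is exactly what makes this integral converge, which is why the hypothesis $\dim A\leq 2$ appears.
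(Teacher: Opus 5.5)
Your proposal is correct. Item (3) is the paper's argument, but for (1) and (2) you take a genuinely different and more economical route.

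\textbf{The paper's route.} It works with sublevel sets. From \eqref{incl} it proves $\lambda_{n+1,m}(\{(W,e)\in G_c:\|P_{W,e}(p)\|<\delta\})\lesssim\delta^{\alpha}|p|^{-1}$ for $1<\alpha<2$, through a four-case analysis in $|p|$, $\delta$ and $|t|$. One case costs a logarithm via Lemma \ref{gamma1}(1). Another needs a separate argument for $m=n$ and Lemma \ref{spherele} for $m<n$. Integrating the distribution function then yields only $\lesssim 1+\|p\|^{-2s/\alpha}$. That loss is absorbed by choosing $\mu$ with $I_{2s/\alpha}(\mu)<\infty$, and with $I_{E,2s/\alpha}(\mu)<\infty$ in (2).

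\textbf{Your route.} You bound the averaged kernel directly.
\begin{itemize}
\item You keep both terms in $\|P_{W,e}(p)\|^2\ge|P_W(x)|^2+c|\langle e,p\rangle|$ and use $(a^2+b)^{-s/2}\le\min\{a^{-s},b^{-s/2}\}$ together with the one-dimensional fibre estimate at exponent $s/2<1$.
\item The splitting $|P_W(x)|^2+|x-P_W(x)|^2=|x|^2$ then makes the bound uniform in $W$. No Grassmannian estimate is needed, and you get the lossless bound $\|p\|^{-s}$, just as in Theorem \ref{projhom2}(1).
\item In (2) you likewise lose nothing. Your sandwich $2\dim_E A\le\dim P_V(A)\le 2\dim_E P_V(A)\le 2\dim_E A$ replaces the paper's appeal to Marstrand's theorem by the fact that $P_V$ is Euclidean Lipschitz.
\end{itemize}

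\textbf{Comparison.} The paper's route gives a single quantitative sublevel-set bound, in the Euclidean form $|p|^{-1}$, that serves (1) and (2) at once. Yours gives brevity, sharp exponents, and a transparent reason for the threshold $\dim A\le 2$: the fibre integral of $|\langle e,q\rangle|^{-s/2}$ converges exactly when $s<2$.

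\textbf{One claim to justify.} You should justify that the $e$-marginal of $\lambda_{n+1,m}$ on $G_c$ is $\lesssim\sigma^n$. It is true. By $O(n)$-invariance of both measures it reduces to comparing the $e_2$-densities $(1-u^2)^{(n-m-1)/2}$ and $(1-u^2)^{(n-2)/2}$ up to constants, and these are comparable for $|u|\le\sqrt{1-c^2}$. Alternatively, you can bypass it by integrating fibrewise as in your item (1):
\begin{itemize}
\item first $\int_{W^\perp\cap S^n}|\langle e,p\rangle|^{-s}\,d\sigma^{n-m+1}(e)\lesssim|P_{W^\perp}(p)|^{-s}$;
\item then use $|P_{W^\perp}(p)|\ge\max\{|t|,|P_{W^\perp\cap H_n}(x)|\}$;
\item finally apply \eqref{gamma3} with $s<1\le n-m+1$, which gives $\lesssim|p|^{-s}$ after integrating over $W$.
\end{itemize}
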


\begin{proof} 
We may assume that $d(A)<1$. 
To prove (1) and (2), we use the parametrization of the planes with $(W,e)\in G$, that is, $W \in G(H_n,m-1), e\in W^{\perp}\cap S^n$, as above. The projection $P_{W,e}$ onto the plane spanned by $(W,e)$ is given by
$$P_{W,e}(p)=P_W(p)+\langle e,p\rangle e,\ p\in\p^n.$$
By \eqref{G-H}, given $0<c<1$, it suffices to consider $P_{W,e}$ with $(W,e)\in G_c$.
Writing again $e=(e_1,e_2), e_1\in H_n, e_2\in\R,$ we have
$$P_{W,e}(x,t)=(P_W(x)+\langle e, (x,t)\rangle e_1,\langle e, (x,t)\rangle e_2)\in H_n\times\R.$$
Therefore,
$$\|P_{W,e}(x,t)\|^2=|P_W(x)+\langle e, (x,t)\rangle e_1|^2+|\langle e, (x,t)\rangle e_2|.$$
Let $0<\delta<c/2$. Then, if $\|P_{W,e}(x,t)\|<\delta$, we get $|P_W(x)+\langle e, (x,t)\rangle e_1|<\delta$ and, since $(W,e)\in G_c$, $|\langle e, (x,t)\rangle |<\delta^2/c<\delta/2$, which implies $|P_W(p)|=|P_W(x)|<2\delta$. Thus

\begin{equation}
\label{incl}\{(W,e)\in G_c:\|P_{W,e}(p)\|<\delta\}
\subset 
\{(W,e)\in G_c:|P_W(p)|<2\delta, |\langle e, p\rangle |<\delta^2/c\}.
\end{equation}

Let $p=(x,t)\in \p^n$ with $|p|<1$ and $1<\alpha<2$. We shall show that
\begin{equation}\label{sigmaest}
\lambda_{n+1,m}(\{(W,e)\in G_c:\|P_{W,e}(p)\|<\delta\})\lesssim\delta^{\alpha}|p|^{-1}.
\end{equation}
The implicit constants are allowed to depend on $n,m,c$ and $\alpha$. By \eqref{distcomp} this gives
\begin{equation}\label{sigmaest1}
\lambda_{n+1,m}(\{(W,e)\in G_c:\|P_{W,e}(p)\|<\delta\})\lesssim\delta^{\alpha}\|p\|^{-2}.\end{equation}

Suppose first that $|p|<\delta^2.$ Then   $\delta^{\alpha}|p|^{-1}> 1$ and \eqref{sigmaest} holds.

Next suppose that $|p|>2\delta.$ Then $|P_W(p)|^2+|P_{W^{\perp}}(p)|^2=|p|^2$ and $|P_W(p)|<\delta$ imply $|P_{W^{\perp}}(p)|>|p|/2$ and we obtain by \eqref{gamma2} and \eqref{distcomp},
\begin{align*}
&\sigma^{n-m+1}(\{e\in W^{\perp}\cap S^n:|\langle e, p \rangle|< \delta^2/c\})\\
&=\sigma^{n-m+1}(\{e\in W^{\perp}\cap S^n:|\langle e, P_{W^{\perp}}(p)\rangle |<\delta^2/c\})\\
&\lesssim\delta^{2}|P_{W^{\perp}}(p)|^{-1}\sim\delta^{2}|p|^{-1},
\end{align*}
which gives \eqref{sigmaest} by \eqref{incl} and \eqref{lambda}.

Next, assume that $\delta^2\leq |p|\leq 2\delta,$ and suppose first that, in addition, $|t|\geq\delta^3.$ We have $P_{W^{\perp}}(x,t)=(P_{W^{\perp}}(x),t)\in H_n\times\R$. Then
\begin{align*}
&\lambda_{n+1,m}(\{(W,e)\in G_c:\|P_{W,e}(p)\|<\delta\})\\
&\leq \int_{\{W:|P_W(p)|<2\delta\}}\sigma^{n-m+1}(\{e\in W^{\perp}\cap S^n:|\langle e, p \rangle|< \delta^2/c\})\,d\gamma_{H_n,m-1}(W)\\
&= \int_{\{W:|P_W(p)|<2\delta\}}\sigma^{n-m+1}(\{e\in W^{\perp}\cap S^2:|\langle e, P_{W^{\perp}}(p)\rangle|< \delta^2/c\})\,d\gamma_{H_n,m-1}(W)\\
&\lesssim \int_{\{W:|P_W(p)|<2\delta\}}\delta^{2}| P_{W^{\perp}}(p)|^{-1}\,d\gamma_{H_n,m-1}(W)\\
&\leq \delta^{2}\int(|P_{W^{\perp}}(x)|+|t|)^{-1} \,d\gamma_{H_n,m-1}(W)\\
&=\delta^{2}\int_{G(n,n-m+1)}(|P_V(x)|+|t|)^{-1} \,d\gamma_{n,n-m+1}(V),
\end{align*}
where $x\in H_n = \R^n, t\in\R$.
If $|x|\leq 2|t|$, 
\begin{align*}
\delta^{2}\int_{G(n,n-m+1)}(|P_V(x)|+|t|)^{-1} \,d\gamma_{n,n-m+1}(V)
\sim \delta^{2}|t|^{-1}\sim \delta^{2}|p|^{-1},\end{align*}
and \eqref{sigmaest} holds. Suppose $2|t|< |x|$.  
Then by Lemma \ref{gamma1},
\begin{align*}
&\delta^{2}\int_{G(n,n-m+1)}(|P_V(x)|+|t|)^{-1} \,d\gamma_{n,n-m+1}(V)\\
&\lesssim \delta^{2}|x|^{-1}\log(|x|/|t|)\lesssim\delta^{2}|\log\delta||x|^{-1}\lesssim \delta^{\alpha}|p|^{-1}.\end{align*}
 Therefore, \eqref{sigmaest} follows also in this case.

Finally, assume that $\delta^2\leq |p|\leq 2\delta$ and $|t|<\delta^3.$ We first consider the case $m=n$. We have again that if $(W,e)\in G_c$ and $\|P_{W,e}(p)\|<\delta$, then $|(\langle e_1, x\rangle+e_2t)e_2|<\delta^2$, 
so $|\langle e_1,  x\rangle|\leq 2\delta^2/c$. For the unit vector $v=e_1/|e_1|\in H_n$ orthogonal to $W$ we have $|\langle v, x\rangle|=|\langle e_1,  x\rangle|/|e_1|\leq 2\delta^2/c^2$. 

For any $W\in G(H_n,n-1)$ let $v_W\in H_n\cap S^n$ be the (two-valued) unit vector orthogonal to $W$. By the uniqueness of $\gamma_{H_n,n-1}$, 
$$\gamma_{H_n,n-1}(F)=\sigma^{n-1}(\{v_W: W\in  F\}),\ F\subset G(H_n,n-1).$$
Thus
\begin{align*}
&\lambda_{n+1,n}(\{(W,e)\in G_c:\|P_{W,e}(p)\|<\delta\})\\
&=\int\sigma^{1}(\{e\in W^{\perp}\cap S^n:\|P_{W,e}(p)\|<\delta\})\,d\gamma_{H_n,n-1}(W)\\
&\leq\gamma_{H_n,n-1}(\{W\in G(H_n,n-1): |\langle v_W, x\rangle |<2\delta^2/c^2\})\\
&=\sigma^{n-1}(\{v\in H_n\cap S^n: |\langle v, x\rangle|<2\delta^2/c^2\})\\
&\lesssim \delta^{2}|x|^{-1}\sim\delta^{2}|p|^{-1},
\end{align*}
and we again have \eqref{sigmaest}.

We shall now finish the proof of \eqref{sigmaest} in the remaining case $2\leq m\leq n-1$ and $\delta^2\leq |p|\leq 2\delta, |t|<\delta^3$. For $W\in G(H_n,m-1)$ we apply Lemma \ref{spherele} in $W^{\perp}\in G(n+1,n-m+2)$ with $k=n-m+1$ and $B=B_W=\{e\in W^{\perp}\cap S^n:(W,e)\in G_c, \|P_{W,e}(p)\|<\delta\}$. As above, if $v\in W^{\perp}\cap H_n\cap S^n$, $C_v=\{e\in W^{\perp}\cap S^n:e_1/|e_1|=v\}$ and $C_v\cap B_W\neq\emptyset$, so that $(W,e)\in G_c$ and $\|P_{W,e}(p)\|<\delta$ for some $e\in C_v\cap B_W$, then $|\langle v, x\rangle|\leq 2\delta^2/c^2$. Hence, by Lemma \ref{spherele} and \eqref{gamma2},
\begin{align*}
    &\sigma^{n-m+1}(\{e\in W^{\perp}\cap S^n:(W,e)\in G_c, \|P_{W,e}(p)\|<\delta\})\\
    &=\sigma^{n-m+1}(B_W)\sim \int_{W^{\perp}\cap H_n\cap S^n} \sigma^1(C_v\cap B_W)\,d\sigma^{n-m}(v)\\
    &\leq \sigma^{n-m}(\{v\in W^{\perp}\cap H_n\cap S^{n}:|\langle v, x\rangle|\leq 2\delta^2/c^2\})\\
    &= \sigma^{n-m}(\{v\in W^{\perp}\cap H_n\cap S^{n}:|\langle v, P_{W^{\perp}(x)}\rangle|\leq 2\delta^2/c^2\})\lesssim \delta^2|P_{W^{\perp}(x)}|^{-1}.
    \end{align*}
Consequently, by \eqref{gammaperp} and Lemma \ref{gamma1},
\begin{align*}
&\lambda_{n+1,n}(\{(W,e)\in G_c:\|P_{W,e}(p)\|<\delta\})\\
&=\int\sigma^{n-m+1}(\{e\in W^{\perp}\cap S^n:(W,e)\in G_c, \|P_{W,e}(p)\|<\delta\})\,d\gamma_{H_n,m-1}(W)\\
&\lesssim \delta^2\int|P_{W^{\perp}(x)}|^{-1}\,d\gamma_{H_n,m-1}(W)=\delta^2\int|P_V(x)|^{-1}\,d\gamma_{n,n-m+1}(V) \\
&\lesssim \delta^2|x|^{-1}\sim\delta^2|p|^{-1},
\end{align*}
and \eqref{sigmaest} is verified in all cases.

Let $0<s<\dim A\leq 2$ and $s<\alpha<2$ with $2s/\alpha < \dim A$. By \eqref{k_s1} there is  $\mu\in\mathcal M(A)$ with $I_{2s/\alpha}(\mu)<\infty$. We now apply \eqref{sigmaest1} with $\delta =r^{-1/s}<c/2$ for $r>(c/2)^{-2}$:
\begin{align*}
 &\int_{G_c} \|P_{W,e}(p)\|^{-s}\,d\lambda_{n+1,m}(W,e)\\
 &\leq (c/2)^{-2}+\int_{(c/2)^{-2}}^{\infty} \lambda_{n+1,m}(\{(W,e)\in G_c:\|P_{W,e}(p)\|^{-s}>r\})\,dr\\
 &= (c/2)^{-2}+\int_{(c/2)^{-2}}^{\infty} \lambda_{n+1,m}(\{(W,e)\in G_c:\|P_{W,e}(p)\|<r^{-1/s}\})\,dr\\
 &\lesssim 1+\|p\|^{-2s/\alpha}+\int_{\|p\|^{-2s/\alpha}}^{\infty}r^{-\alpha/s}\|p\|^{-2}\,dr\\ 
 &\sim 1+\|p\|^{-2s/\alpha}+\|p\|^{-2s/\alpha}=1+2\|p\|^{-2s/\alpha}.\\
\end{align*}
Hence
\begin{align*}
    &\int_{G_c} I_s(P_{W,e\sharp}(\mu))\,d\lambda_{n+1,m}(W,e) \\
    &=\int_{G_c}\iint\|P_{W,e}(p-q)\|^{-s}\,d\mu(p)\,d\mu(q)\,d\lambda_{n+1,m}(W,e)\\
    &\lesssim \mu(A)^2+\iint\|p-q\|^{-2s/\alpha}\,d\mu(p)\,d\mu(q) = \mu(A)^2+I_{2s/\alpha}(\mu)<\infty.
    \end{align*}
It follows that for all $0<s<\dim A$,\ $\dim P_{W,e}(A) \geq s$ for almost all $(W,e)\in G_c$, whence $\dim P_{W,e}(A) \geq \dim A$ for almost all $(W,e)\in G_c$. Due to \eqref{G-H}, this proves (1). 

To prove (2), let $0<s<\dim_EA\leq 1$ and   $2s<\alpha<2$ with $2s/\alpha<\dim_EA$. Choose $\mu\in\mathcal M(A)$ with $I_{E,2s/\alpha}(\mu)<\infty$. Use then \eqref{sigmaest} and the same argument as above to obtain

$$\int_{G_c} I_{2s}(P_{W,e\sharp}(\mu))\,d\lambda_{n+1,m}(W,e) \lesssim  \mu(A)^2+I_{E,2s/\alpha}(\mu)<\infty.$$
This yields $\dim P_{W,e}(A) \geq 2\dim_E A$ for almost all $(W,e)\in G_c$. By the dimension comparison and the Euclidean projection theorem, we have $\dim P_{W,e}(A) \leq 2\dim_E P_{W,e}(A) = 2\dim_E A$ for almost all $(W,e)$. (2) follows from this.

(3) follows immediately from the corresponding Euclidean result Theorem \ref{Marstrand} and \eqref{comp2}: if $\dim A  > m+1$, then $\dim_E A  > m$, whence $\mathcal L^m(P_V(A))>0$ for almost all $V\in G(n+1,m)$. 
\end{proof}

\begin{rems}
Under the restrictions $\dim A\leq 2$ and $\dim_EA\leq 1$, (1) and (2) of Theorem \ref{projplanesint} are sharp. What would be the sharp inequalities in general? Probably for (1), $\dim P_V(A)\geq \dim A$ for almost all $V\in G(n,m+1)$ could hold always when $\dim A\leq m+1$. This is open when $n \geq 3$ and $m \geq 2$, and would be sharp because if $A\subset W$ for some non-horizontal $W\in G(n+1,m)$, then $\dim P_V(A)=\dim A$ for almost all $V\in G(n+1,m)$.  For (2), when $\dim_EA\leq m$, there should be an inequality $\dim P_V(A)\geq C\dim_E P_V(A)$ for almost all $V\in G(n+1,m)$ with some $C>1$. If so, what would be the best value of $C$ and would it depend only on $m$ and $n$ or also on the Euclidean and/or parabolic Hausdorff dimensions of $A$? When $\dim_EA\leq 1$, it is $2$ by Theorem \ref{projplanesint}.

\end{rems}

\section{Proof of Theorem~\ref{projn=2}}. 

In this section, $d_E$ will refer to Euclidean distance.

%\begin{thm} If $A \subseteq \mathbb{R}^2 \times \mathbb{R}$ is Borel with $2 < \dim A \leq 3$, %then $\dim(P_V(A)) \geq \dim A$ for a.e.~$V \in G(3,2)$. \end{thm}

\begin{proof}[Proof of Theorem~\ref{projn=2}] 
The case $\dim A \leq 2$ already follows from case (1) of Theorem~\ref{projplanesint}, so it may be assumed that $m=2$ and $2 < \dim A \leq 3$. Using Frostman's lemma \eqref{upperdensity}, let $\mu$ be a compactly supported nonzero Borel measure on $A$ with $c_{\alpha}(\mu) \leq 1$, where $2 < \alpha<  \dim A$. %, where 
%\begin{equation} \label{mufractal2} c_{\alpha}(\mu) = \sup_{x \in \mathbb{R}^3, r >0} \frac{ \mu(B(x,r))}{r^{\alpha}}, \end{equation} and the balls are with respect to the parabolic metric. 
By \eqref{k_s2}%(see \cite[Appendix B]{BP17} and \cite[Proof of Theorem~2.8]{Mat15})
, it suffices to show that for any  $2 < s < \alpha$, for any $\epsilon >0$,
\begin{equation} \label{energybound}\int_{G_{\epsilon}(3,2)} I_{s}(P_{V \sharp} \mu) \, d\gamma_{3,2}(V) < \infty, \end{equation}
where $G_{\epsilon}(3,2)$ is the subset of $V \in G(3,2)$ with unit normal $v \in S^2_{+,\epsilon}$, where 
\[ S^2_{+, \epsilon} = \left\{ v \in S^2_+ : d_E(v, (0,0,1))> \epsilon, d_E(v, \mathbb{R}^2 \times \{0\} ) > \epsilon\right\}, \]
and $S^2_+$ is the upper hemisphere of $S^2$.  By scaling it may be assumed that $\mu$ is supported in the unit ball.

 Each $V \in G_{\epsilon}(3,2)$ can be written as $V = \spn\{v_1,v_2\}$, where $v_1=v_1(V)$ is a horizontal unit vector in $\mathbb{R}^2 \times \{0\}$, and $v_2=v_2(V)$ is a unit vector orthogonal to $v_1$ which makes an angle $\gtrsim \epsilon$ with the horizontal plane $\mathbb{R}^2 \times \{0\}$. More precisely, if $V = v^{\perp}$ where $v = \left(v^{(1)},v^{(2)},v^{(3)}\right) \in S^2_{+, \epsilon}$, then $v_1$ is the unit vector obtained by scaling $\left(-v^{(2)},v^{(1)},0\right)$, and $v_2$ is the unit vector obtained by scaling $\left(-v^{(1)}v^{(3)},-v^{(2)}v^{(3)}, \left(v^{(1)}\right)^2+\left(v^{(2)}\right)^2\right)$.

For $V \in G_{\epsilon}(3,2)$, write $v_2 = v_2' + v_2''$, where $v_2'$ is in the horizontal plane and $v_2''$ is parallel to $(0,0,1)$ with $|v_2''| \gtrsim \epsilon$ (note that $v_2'$ and $v_2''$ may not be in $V$). Then $v_2'$ and $v_2''$ are orthogonal to $v_1$. Thus, if $x = x_1 v_1 + x_2 v_2 \in V$, and $y = y_1 v_1 + y_2 v_2 \in V$, then
\[ d(x,y) = \sqrt{ (x_1-y_1)^2 + (x_2-y_2)^2 |v_2'|^2 + \left\lvert (x_2-y_2)v_2''\right\rvert}. \]
But $|v_2'| \leq 1$ and $|v_2''| \gtrsim \epsilon$, so this simplifies to 
\begin{align*} d(x,y) &\sim |x_1-y_1| + |x_2-y_2|^{1/2} \\
&\sim \left( |x_1-y_1|^4 + |x_2-y_2|^2\right)^{1/4}, \end{align*}
in any bounded set around the origin, where the implicit constant depends only on $\epsilon$ and the distance from the origin. Therefore, \eqref{energybound} is equivalent to 
\begin{equation} \label{energybound2} \int_{S^2_{+, \epsilon}} \int \int \frac{1}{\left(\left\lvert \langle x-y, v_1 \rangle \right\rvert^4 + \left\lvert \langle x-y, v_2 \rangle \right\rvert^{2}\right)^{s/4} } \, d\mu(x) \, d\mu(y) \, d\sigma(v) < \infty, \end{equation} 
where $\sigma$ is the surface measure on $S^2$. For $v \in S^2_{+,\epsilon}$, let $\pi_v: \mathbb{R}^3 \to \mathbb{R}^2$ be $\pi_v(x) = \left( \langle x, v_1 \rangle, \langle x, v_2 \rangle\right)$. Then \eqref{energybound2} can be written as 
\begin{equation} \label{energybound3prime} \int_{S^2_{+, \epsilon}} \int f_s(x) \, d\left( \iota_{\sharp}\pi_{v\sharp} \mu \ast \pi_{v\sharp}\mu\right)(x) \, d\sigma(v) < \infty, \end{equation} 
where $\iota(x)=-x$, and
\[ f_s(x,t) = \left(|x|^4 +t^2\right)^{s/4}, \qquad (x,t) \in \mathbb{R}^2. \]
By Plancherel's theorem (more precisely \cite[Lemma~3.3]{Ha23}), to prove \eqref{energybound3prime} it is enough to show that 
\begin{equation} \label{energybound3} \int_{S^2_{+, \epsilon}} \int_{\mathbb{R}^2} \widehat{f_s}(\xi) \left\lvert \widehat{ \pi_{v\sharp}\mu}(\xi) \right\rvert^2 \, d\xi \, d\sigma(v) < \infty, \end{equation} 
where the left-hand side is well-defined since $\widehat{f_s}>0$ when $1 < s < 3$ (see \cite[Lemma~3.2]{Ha23}). By \cite[Lemma~3.2]{Ha23}, $\widehat{f_s} \lesssim f_{3-s}$ when $1 < s < 3$, so by dyadically decomposing the frequency space $\mathbb{R}^2$ into regions where $|\xi_1| +|\xi_2|^{1/2} \sim 2^{j}$, to prove \eqref{energybound3} it suffices to show that 
%\begin{equation} \label{energybound4}
\[ \sum_{j=1}^{\infty} 2^{-j(3-s)} \int_{S^2_{+, \epsilon}} \int_{[-2^j, 2^j] \times [-2^{2j}, 2^{2j}]}  \left\lvert \widehat{ \pi_{v\sharp}\mu}(\xi) \right\rvert^2 \, d\xi \, d\sigma(v) < \infty, %\end{equation}
\]
where boundedness of the $j = 0$ term follows from the $L^{\infty}$ bound on $\widehat{ \pi_{v\sharp} \mu}$ and the local integrability of $f_{3-s}$. It is enough to show that for $2 < s < \alpha$, for any $j \geq 1$,  
\begin{equation} \label{bigoh}  \int_{S^2_{+, \epsilon}} \int_{[-2^j, 2^j] \times [-2^{2j}, 2^{2j}]}  \left\lvert \widehat{ \pi_{v\sharp}\mu}(\xi) \right\rvert^2 \, d\xi \, d\sigma(v) \ll 2^{j(3-s)}, \end{equation}
where $A_j \ll B_j$ in \eqref{bigoh} is an abbreviation for $A_j \lesssim j^{-2} B_j$ (the use of $j^{-2}$ is not important; any summable series in $j$ with slower than exponential decay would work too). By using $|z|^2 = z\overline{z}$ and writing out the Fourier transform, \eqref{bigoh} is equivalent to 
\[ \int \int \int_{S^2_{+, \epsilon}} \int_{[-2^j, 2^j] \times [-2^{2j}, 2^{2j}]}  e^{ 2\pi i \langle \pi_v(x-y), \xi \rangle} \, d\xi \, d\sigma(v) \, d\mu(x) \, d\mu(y) \ll 2^{j(3-s)}, \]
for any $j \geq 1$. To avoid complicating the notation in what follows, $z \ll r$ will mean $|z| \ll r$ in case $z$ is complex and $r$ is real. Given $j$, dyadically decompose the domain $(x,y)$ according to the parabolic distance $d(x,y) \sim 2^{-k}$ from $x$ to $y$, where $1 \leq k \leq j$. Since $s< \alpha$, the bound for the contribution from $d(x,y) < 2^{-j}$ follows easily from the fractal condition $c_{\alpha}(\mu) \leq 1$ on $\mu$ and the trivial upper bound of $2^{3j}$ for the oscillatory integral. Therefore, it suffices to prove that 
\begin{multline} \label{required} \sum_{k=1}^j \iint_{d(x,y) \sim 2^{-k}} \int_{S^2_{+, \epsilon}} \int_{[-2^j, 2^j] \times [-2^{2j}, 2^{2j}]}  e^{ 2\pi i \langle \pi_v(x-y), \xi \rangle} \\
 d\xi \, d\sigma(v) \, d\mu(x) \, d\mu(y) \ll 2^{j(3-s)}. \end{multline}
 The proof of \eqref{required} will be broken into two cases. It will be shown that 
\begin{multline} \label{lowkbound}\sum_{k=1}^{j/2} \iint_{d(x,y) \sim 2^{-k}} \int_{S^2_{+, \epsilon}} \int_{[-2^j, 2^j] \times [-2^{2j}, 2^{2j}]}  e^{ 2\pi i \langle \pi_v(x-y), \xi \rangle} \\
 d\xi \, d\sigma(v) \, d\mu(x) \, d\mu(y) \ll 2^{j(3-s)}, \end{multline}
and 
\begin{multline} \label{highkbound} \sum_{k=j/2}^{j} \iint_{d(x,y) \sim 2^{-k}} \int_{S^2_{+, \epsilon}} \int_{[-2^j, 2^j] \times [-2^{2j}, 2^{2j}]}  e^{ 2\pi i \langle \pi_v(x-y), \xi \rangle} \\
 d\xi \, d\sigma(v) \, d\mu(x) \, d\mu(y) \ll 2^{j(3-s)}. \end{multline}
 For the sum over $j/2 \leq k \leq j$, partition the domain further according to the Euclidean distance $d_E(x,y) \sim 2^{-k} 2^{-\ell}$ where $1 \leq \ell \leq k$, where $d_E$ is the Euclidean distance. Then to prove \eqref{highkbound} it is enough to show that
\begin{multline*} %\label{highkboundell} 
\sum_{k=j/2}^{j} \sum_{\ell=1}^k \iint_{\substack{d(x,y) \sim 2^{-k} \\ d_E(x,y) \sim 2^{-k-\ell}}} \int_{S^2_{+, \epsilon}} \int_{[-2^j, 2^j] \times [-2^{2j}, 2^{2j}]}  e^{ 2\pi i \langle \pi_v(x-y), \xi \rangle} \\
 d\xi \, d\sigma(v) \, d\mu(x) \, d\mu(y) \ll 2^{j(3-s)}. \end{multline*}
This will be further subdivided into the two sub-cases
\begin{multline} \label{highkboundell1} \sum_{k=j/2}^{j} \sum_{\ell=1}^{j-k} \iint_{\substack{d(x,y) \sim 2^{-k} \\ d_E(x,y) \sim 2^{-k-\ell}}} \int_{S^2_{+, \epsilon}} \int_{[-2^j, 2^j] \times [-2^{2j}, 2^{2j}]}  e^{ 2\pi i \langle \pi_v(x-y), \xi \rangle} \\
 d\xi \, d\sigma(v) \, d\mu(x) \, d\mu(y) \ll 2^{j(3-s)}, \end{multline}
and 
\begin{multline} \label{highkboundell2} \sum_{k=j/2}^{j} \sum_{\ell=j-k}^k \iint_{\substack{d(x,y) \sim 2^{-k} \\ d_E(x,y) \sim 2^{-k-\ell}}} \int_{S^2_{+, \epsilon}} \int_{[-2^j, 2^j] \times [-2^{2j}, 2^{2j}]}  e^{ 2\pi i \langle \pi_v(x-y), \xi \rangle} \\
 d\xi \, d\sigma(v) \, d\mu(x) \, d\mu(y) \ll 2^{j(3-s)}. \end{multline}
 The assumption $k \geq j/2$ ensures that $j-k \leq k$, so the subdivision makes sense. For the first sub-case \eqref{highkboundell1}, by writing the 2-dimensional $\xi$ integral as a product of two 1-dimensional integrals and calculating each explicitly,
\begin{multline} \label{pause} \eqref{highkboundell1} \lesssim \sum_{k=j/2}^{j} \sum_{\ell=1}^{j-k} \iint_{\substack{d(x,y) \sim 2^{-k} \\ d_E(x,y) \sim 2^{-k-\ell}}} \int_{S^2_{+, \epsilon}} \\
 \frac{1}{ \max\left\{2^{-j}, \left\lvert \langle v_1, x-y \rangle \right\rvert\right\} \max\left\{ 2^{-2j},  \left\lvert\langle v_2, x-y \rangle \right\rvert \right\}} \, d\sigma(v) \, d\mu(x) \, d\mu(y). \end{multline} 
The integral over $S^2_{+,\epsilon}$ can be partitioned into dyadic regions where $\left\lvert \langle v_1, x-y \rangle \right\rvert \sim 2^{-m_1}$ and $\left\lvert \langle v_2, x-y \rangle \right\rvert \sim 2^{-m_2}$, where $k+\ell \leq m_1 \leq j$ and $k+\ell \leq m_2 \leq 2j$, with $\sim$ replaced by $\lesssim$ when either $m_1=j$ or $m_2 = 2j$. It will be shown that each such dyadic region has surface area $\lesssim 2^{-m_1-m_2 +2(\ell+k)}$.

 If $\ell < k-C$ for a large constant $C$, and $u=(x-y)/|x-y|$, then $u$ makes an angle $< \epsilon^{100}$ with the horizontal plane (since otherwise it would hold
\[ 2^{-k-\ell} \sim d_E(x,y) \sim |x_3-y_3| \sim d(x,y)^2 \sim 2^{-2k}, \]
which is a contradiction if $C$ is sufficiently large depending on $\epsilon$). Therefore, the function $f_u: B_2\left(0,1-\frac{\epsilon^2}{100}\right) \to \mathbb{R}^2$ given by
\[ f_u(w_1,w_2) = \left(-w_2u_1  + w_1 u_2, -w_1u_1-w_2u_2+ \frac{u_3(w_1^2+w_2^2)}{\sqrt{1-w_1^2-w_2^2}}\right) \]
 has 
\begin{align} \notag \det(Df_u(w_1,w_2)) &= \det\begin{pmatrix} u_2 & -u_1 \\ -u_1 + O\left(u_3\epsilon^{-10}\right) & -u_2+ O\left(u_3\epsilon^{-10}\right) \end{pmatrix} \\
\label{detbound} & = -(u_1^2+u_2^2) + O\left(u_3 \epsilon^{-10}\sqrt{u_1^2+u_2^2}\right). \end{align} 
The reason for introducing this function is that $f_u(w_1,w_2)$ is (up to a harmless scaling) the same as $\left( \langle v_1, u \rangle, \langle v_2, u \rangle \right)$ when $v=\left(w_1,w_2,\sqrt{1-w_1^2-w_2^2}\right)$.  By the inverse function theorem and \eqref{detbound}, it follows that for any unit vector $u$ with $|u_3| \lesssim \epsilon^{100}\sqrt{u_1^2+u_2^2}$, for any open set $O \subseteq \mathbb{R}^2$, 
\[ \mathcal{H}^2_E\left(f_u^{-1}(O)\right) \lesssim \mathcal{H}^2_E(O), \]
where $\mathcal{H}^2_E$ is the 2-dimensional Lebesgue measure. Applying the above with $O$ a rectangle of dimensions $\sim 2^{-m_1+\ell+k} \times  2^{-m_2+\ell+k}$ verifies the bound of $2^{-m_1-m_2 +2(\ell+k)}$ for the surface area, in the case where $\ell < k-C$ for a large constant $C$. 

The other case for the surface area bound is $k-C \leq \ell \leq k$ (and still $\ell \leq j-k \leq j/2$). In this case $2k-C \leq m_1 \leq 2k$, and hence $2^{-m_1-m_2 +2(\ell+k)} \sim 2^{-m_2+\ell+k}$, so it is enough to show that the region of $v$ where $\left\lvert \langle v_2, x-y \rangle \right\rvert \sim 2^{-m_2}$ (with $\sim$ replaced by $\lesssim$ when $m_2 = 2j$) has surface area $\lesssim 2^{-m_2 +\ell+k}$. If $F: S^2_{+,0} \to S^2_{+,0}$ is 
\begin{align*} F(w) &= \frac{1}{\left\lvert\left(-w_1w_3, -w_2w_3, w_1^2+w_2^2\right) \right\rvert }\left(-w_1w_3, -w_2w_3, w_1^2+w_2^2\right) \\
&= \left( \frac{-w_1w_3}{\sqrt{w_1^2+w_2^2}}, \frac{-w_2w_3}{\sqrt{w_1^2+w_2^2}}, \sqrt{w_1^2+w_2^2} \right), \end{align*}
then writing  $(x,y,z)=F(w)$ gives $w_3 = \sqrt{1-z^2}$, $w_1 = \frac{-xz}{\sqrt{1-z^2}}$, and $w_2=\frac{-yz}{\sqrt{1-z^2}}$. Therefore, $F$ is a self-inverse diffeomorphism of $S^2_{+,0}$ onto itself, and in particular $F$ is bi-Lipschitz on $S^2_{+,\epsilon}$. The region $\{ v \in S^2: \left\lvert \langle v, x-y \rangle \right\rvert \sim 2^{-m_2} \}$ is the intersection of the $2^{-m_2+\ell+k}$-neighbourhood of a plane through the origin with $S^2$, which has area $\sim 2^{-m_2+\ell+k}$, so the inverse image of this set under $F:  S^2_{+,\epsilon} \to S^2_+$ also has surface area $\lesssim 2^{-m_2+\ell+k}$. 

This verifies the claim that the dyadic regions described above have surface area $\lesssim 2^{-m_1-m_2 +2(\ell+k)}$, for each pair $(m_1, m_2)$. Since there are $\lesssim \log\left(2^j\right)^2$ many such pairs $(m_1,m_2)$, summing over $m_1$ and $m_2$ in \eqref{pause} yields the bound 
\[ \eqref{highkboundell1} \lesssim \log\left(2^j\right)^2 \sum_{k=j/2}^{j}  \sum_{\ell=1}^{j-k} 2^{2(\ell+k)} \iint_{\substack{d(x,y) \sim 2^{-k} \\ d_E(x,y) \sim 2^{-k-\ell}}} \, d\mu(x) \, d\mu(y). \]
For fixed $y$, the region of integration in $x$ is contained in a Euclidean cylinder of dimensions $2^{-(k+\ell)} \times 2^{-2k}$, which can be covered by $\lesssim 2^{2\ell}$ many parabolic balls of radius $2^{-(k + \ell)}$. Therefore, each such Euclidean cylinder contributes $\lesssim 2^{2\ell -\alpha(k+\ell)}$ in $\mu$-measure to the above. Hence, the above becomes
\[ \eqref{highkboundell1} \lesssim \log\left(2^j\right)^2 \sum_{k=j/2}^{j}  \sum_{\ell=1}^{j-k} 2^{2(\ell+k)} 2^{2\ell -\alpha(k+\ell)}. \]
This simplifies to 
\[ \eqref{highkboundell1} \lesssim \log\left(2^j\right)^2 \sum_{k=j/2}^{j}  \sum_{\ell=1}^{j-k} 2^{\ell(4-\alpha) -k(\alpha-2)}. \]
Since $\alpha < 3 < 4$, summing the geometric series in $\ell$ gives 
\[ \eqref{highkboundell1} \lesssim \log\left(2^j\right)^2 \sum_{k=j/2}^{j}  2^{(j-k)(4-\alpha) -k(\alpha-2)}. \]
This simplifies to 
\[ \eqref{highkboundell1} \lesssim \log\left(2^j\right)^2 \sum_{k=j/2}^{j}  2^{-2k +j(4-\alpha)}. \]
Summing the geometric series in $k$ gives 
\[ \eqref{highkboundell1} \lesssim \log\left(2^j\right)^2  2^{j(3-\alpha)}. \]
This implies \eqref{highkboundell1} in this sub-case, since $s < \alpha$. This verifies the first sub-case \eqref{highkboundell1} (high $k$ and low $\ell$). 

For the second sub-case \eqref{highkboundell2} (high $k$ and high $\ell$), by calculating the integrals explicitly as before,
\begin{multline} \label{pause2} \eqref{highkboundell2} \lesssim \sum_{k=j/2}^{j} \sum_{\ell=j-k}^{k} \iint_{\substack{d(x,y) \sim 2^{-k} \\ d_E(x,y) \sim 2^{-k-\ell}}} \int_{S^2_{+, \epsilon}} \\
 \frac{1}{ 2^{-j} \max\left\{ 2^{-2j},  \left\lvert\langle v_2, x-y \rangle \right\rvert \right\}} \, d\sigma(v) \, d\mu(x) \, d\mu(y). \end{multline}
The integral over $S^2_+$ can be subdivided into dyadic regions where $\left\lvert \langle v_2, x-y \rangle \right\rvert \sim 2^{-m}$, where $k+\ell \leq m \leq 2j$, with $\sim$ replaced by $\lesssim$ when $m = 2j$. Each such dyadic region has surface area $\sim 2^{-m +\ell+k}$ by the same reasons as above; it is the inverse image (under a bi-Lipschitz function from $S^2_{+,\epsilon} \to S^2$) of the $2^{-m+\ell+k}$-neigbourhood of a plane through the origin intersected with $S^2$.

Since there are $\lesssim \log\left(2^j\right)$ many  values of $m$, summing \eqref{pause2} over $m$ using the above surface area bound yields 
\[ \eqref{highkboundell2} \lesssim \log\left(2^j\right) \sum_{k=j/2}^{j}  \sum_{\ell= j-k}^k 2^{\ell+k+j} \iint_{\substack{d(x,y) \sim 2^{-k} \\ d_E(x,y) \sim 2^{-k-\ell}}} \, d\mu(x) \, d\mu(y). \] 
As before, the Euclidean cylinder contributes $\mu$-measure $\lesssim 2^{2\ell -\alpha(k+\ell)}$, so the above becomes 
\[ \eqref{highkboundell2} \lesssim \log\left(2^j\right) \sum_{k=j/2}^{j}  \sum_{\ell=j-k}^k 2^{\ell+k+j} 2^{2\ell -\alpha(k+\ell)}. \] 
This simplifies to 
\[ \eqref{highkboundell2} \lesssim \log\left(2^j\right) \sum_{k=j/2}^{j}  \sum_{ \ell=j-k}^k 2^{\ell(3-\alpha)-k(\alpha-1)+j}. \] 
Since $\alpha < 3$, summing the geometric series over $\ell$ gives 
\[ \eqref{highkboundell2} \lesssim \log\left(2^j\right) \sum_{k=j/2}^{j}  2^{k(3-\alpha)-k(\alpha-1)+j}. \] 
This simplifies to 
\[ \eqref{highkboundell2} \lesssim \log\left(2^j\right) \sum_{k=j/2}^{j}  2^{-2k(\alpha-2)+j}. \] 
Since $\alpha >2$, summing the geometric series over $k$ gives
\[ \eqref{highkboundell2} \lesssim \log\left(2^j\right)  2^{-j(\alpha-2)+j} = \log\left(2^j\right)  2^{j(3-\alpha)}. \] 
This implies \eqref{highkboundell2} in this sub-case, since $s < \alpha$. This is the remaining sub-case for high $k$, so this verifies the case $j/2 \leq k \leq j$ in \eqref{highkbound}.  

It remains to prove \eqref{lowkbound} (low $k$).  As before, partition the domain according to $d_E(x,y) \sim 2^{-k} 2^{-\ell}$ where $1 \leq \ell \leq k$, where $d_E$ is the Euclidean distance. To prove \eqref{lowkbound} it suffices to show that
\begin{multline*}  \sum_{k=1}^{j/2} \sum_{\ell=1}^k \iint_{\substack{d(x,y) \sim 2^{-k} \\ d_E(x,y) \sim 2^{-k-\ell}}} \int_{S^2_{+, \epsilon}} \int_{[-2^j, 2^j] \times [-2^{2j}, 2^{2j}]}  e^{ 2\pi i \langle \pi_v(x-y), \xi \rangle} \\
 d\xi \, d\sigma(v) \, d\mu(x) \, d\mu(y) \ll 2^{j(3-s)}. \end{multline*}
As before, let $C$ be a constant chosen sufficiently large depending on $\epsilon$. The above will be broken into the two sub-cases
\begin{multline} \label{subcaseeasy}   \sum_{k=1}^{j/2} \sum_{\ell=1}^{k-C} \iint_{\substack{d(x,y) \sim 2^{-k} \\ d_E(x,y) \sim 2^{-k-\ell}}} \int_{S^2_{+, \epsilon}} \int_{[-2^j, 2^j] \times [-2^{2j}, 2^{2j}]}  e^{ 2\pi i \langle \pi_v(x-y), \xi \rangle} \\
 d\xi \, d\sigma(v) \, d\mu(x) \, d\mu(y) \ll 2^{j(3-s)}. \end{multline}
and
\begin{multline} \label{subcasehard}  \sum_{k=1}^{j/2} \sum_{\ell=k-C}^k \iint_{\substack{d(x,y) \sim 2^{-k} \\ d_E(x,y) \sim 2^{-k-\ell}}} \int_{S^2_{+, \epsilon}} \int_{[-2^j, 2^j] \times [-2^{2j}, 2^{2j}]}  e^{ 2\pi i \langle \pi_v(x-y), \xi \rangle} \\
 d\xi \, d\sigma(v) \, d\mu(x) \, d\mu(y) \ll 2^{j(3-s)}. \end{multline}
For the first sub-case where $\eqref{lowkbound}\lesssim  \eqref{subcaseeasy}$, similarly to before, by calculating the innermost integral, 
\begin{multline*} \eqref{subcaseeasy} \lesssim \sum_{k=1}^{j/2} \sum_{\ell=1}^{k-C} \iint_{\substack{d(x,y) \sim 2^{-k} \\ d_E(x,y) \sim 2^{-k-\ell}}} \int_{S^2_{+, \epsilon}} \\
 \frac{1}{ \max\left\{2^{-j}, \left\lvert \langle v_1, x-y \rangle \right\rvert\right\} \max\left\{ 2^{-2j},  \left\lvert\langle v_2, x-y \rangle \right\rvert \right\}} \,  d\sigma(v) \, d\mu(x) \, d\mu(y). \end{multline*} 
Similarly to the low $\ell$ sub-case of the high $k$ case with $\ell \leq k-C$, this becomes 
\[ \eqref{subcaseeasy} \lesssim \log\left(2^j\right)^2 \sum_{k=1}^{j/2} \sum_{\ell=1}^{k-C} 2^{2(k + \ell)} \iint_{\substack{d(x,y) \sim 2^{-k} \\ d_E(x,y) \sim 2^{-k-\ell}}}  \, d\mu(x) \, d\mu(y). \]
By using the same bound of $2^{2\ell - \alpha(k+\ell)}$ on the $\mu$-measure of the Euclidean cylinder as before, this becomes 
\[ \eqref{subcaseeasy} \lesssim \log\left(2^j\right)^2 \sum_{k=1}^{j/2} \sum_{\ell=1}^{k-C} 2^{2(k + \ell)} 2^{2\ell - \alpha(k+\ell)}. \]
This simplifies to 
\[ \eqref{subcaseeasy} \lesssim \log\left(2^j\right)^2 \sum_{k=1}^{j/2} \sum_{\ell=1}^{k-C} 2^{\ell(4-\alpha) -  k(\alpha-2)}. \]
Since $\alpha < 3 < 4$, summing the geometric series in $\ell$ gives 
\[ \eqref{subcaseeasy} \lesssim \log\left(2^j\right)^2 \sum_{k=1}^{j/2}  2^{2k(3-\alpha)}. \]
Since $\alpha < 3$, summing the geometric series in $k$ gives 
\[ \eqref{subcaseeasy} \lesssim \log\left(2^j\right)^2   2^{j(3-\alpha)}. \]
This implies \eqref{subcaseeasy} in this sub-case, since $s < \alpha$. This verifies the case where $\eqref{lowkbound} \lesssim \eqref{subcaseeasy}$.

It remains to prove \eqref{subcasehard}. %This case can be written more simply as 
By allowing the (slightly ambiguous) notation $\sim$ to include dependence on $C$, this case can be abbreviated to
\begin{multline*}  \sum_{k=1}^{j/2}  \iint_{\substack{d(x,y) \sim 2^{-k} \\ d_E(x,y) \sim 2^{-2k}}} \int_{S^2_{+, \epsilon}} \int_{[-2^j, 2^j] \times [-2^{2j}, 2^{2j}]}  e^{ 2\pi i \langle \pi_v(x-y), \xi \rangle} \\
 d\xi \, d\sigma(v) \, d\mu(x) \, d\mu(y) \ll 2^{j(3-s)}. \end{multline*}
Write $x = (x',x'') \in \mathbb{R}^2 \times \mathbb{R}$, and dyadically partition the domain further into sets where either $|x'-y'| \lesssim 2^{-j}$, or $|x'-y'| \sim 2^{-2k- \ell}$ where $0 \leq \ell \leq j-2k$. It suffices to show that
\begin{multline} \label{edgecase}  \sum_{k=1}^{j/2} \iint_{\substack{d(x,y) \sim 2^{-k} \\ |x'-y'| \leq 2^{-j}}} \int_{S^2_{+, \epsilon}} \int_{[-2^j, 2^j] \times [-2^{2j}, 2^{2j}]}  e^{ 2\pi i \langle \pi_v(x-y), \xi \rangle} \\
 d\xi \, d\sigma(v) \, d\mu(x) \, d\mu(y) \ll 2^{j(3-s)},  \end{multline} 
and
\begin{multline} \label{nonedge} \sum_{k=1}^{j/2} \sum_{\ell=0}^{j-2k} \iint_{\substack{d(x,y) \sim 2^{-k} \\ |x'-y'| \sim 2^{-2k-\ell}}} \int_{S^2_{+, \epsilon}} \int_{[-2^j, 2^j] \times [-2^{2j}, 2^{2j}]}  e^{ 2\pi i \langle \pi_v(x-y), \xi \rangle} \\
 d\xi \, d\sigma(v) \, d\mu(x) \, d\mu(y) \ll 2^{j(3-s)}.  \end{multline} 
The edge case \eqref{edgecase} satisfies
\[ \eqref{edgecase} \lesssim  \sum_{k=1}^{j/2} \iint_{\substack{d(x,y) \sim 2^{-k} \\ |x'-y'| \leq 2^{-j}}} \int_{S^2_{+, \epsilon}} \frac{1}{2^{-j} \max\left\{2^{-2j}, \left\lvert \langle v_2, x-y \rangle \right\rvert\right\}} \, d\sigma(v) \, d\mu(x) \, d\mu(y). \]
Similarly to before, this becomes
\[ \eqref{edgecase} \lesssim \log\left(2^j\right) \sum_{k=1}^{j/2} 2^{2k+j} \iint_{\substack{d(x,y) \sim 2^{-k} \\ |x'-y'| \leq 2^{-j}}} \, d\mu(x) \, d\mu(y).  \]
By covering the $2^{-j} \times 2^{-2k}$ Euclidean cylinder with $\lesssim 2^{2j-2k}$ many parabolic balls of radius $2^{-j}$ similarly to before, this becomes
\[ \eqref{edgecase} \lesssim  \log\left(2^j\right) \sum_{k=1}^{j/2} 2^{2k+j} 2^{2j-2k} 2^{-\alpha j}.  \]
This simplifies to 
\[ \eqref{edgecase} \lesssim \log\left(2^j\right)^2 2^{j(3-\alpha)},  \]
which verifies \eqref{edgecase} since $\alpha >s$. 

It remains to consider the non-edge case \eqref{nonedge}. Similarly to previous cases,
\begin{multline} \label{pause3} \eqref{nonedge} \lesssim \sum_{k=1}^{j/2} \sum_{\ell=0}^{j-2k} \iint_{\substack{d(x,y) \sim 2^{-k} \\ |x'-y'| \sim 2^{-2k-\ell}}} \int_{S^2_{+, \epsilon}} \\
\frac{1}{\max\left\{ 2^{-j}, \left\lvert \left\langle x-y,v_1 \right\rangle\right\rvert \right\} \max\left\{ 2^{-2j}, \left\lvert \left\langle x-y,v_2 \right\rangle\right\rvert \right\}} \, d\sigma(v) \, d\mu(x) \, d\mu(y).  \end{multline} 
Dyadically partition $S^2_{+, \epsilon}$ according to $\left\lvert \langle x-y, v_1 \rangle\right\rvert \sim 2^{-m_1}$ and $\left\lvert \langle x-y, v_2 \rangle\right\rvert \sim 2^{-m_2}$, where $2k+\ell \leq m_1 \leq j$ and $2k \leq m_2 \leq 2j$, with $\sim$ replaced by $\lesssim$ when either $m_1 =j$ or $m_2 = 2j$. It will be shown that each such region has surface area $\lesssim 2^{-m_1-m_2 +2k+\ell + 2k}$.  If $m_1 \leq 2k+\ell + C$ then this follows easily from previous arguments, so it may be assumed that $m_1 \gg 2k+\ell$. Similarly, it may be assumed that $m_2 \gg 2k$. If $v$ is written in (one version of) spherical coordinates 
\[ v = \left( \cos \phi \cos \theta, \cos \phi \sin \theta, \sin \phi \right), \qquad 0 \leq \theta < 2\pi, \quad c\epsilon \leq \phi \leq \frac{\pi}{2} - c\epsilon, \]
which includes $S^2_{+,\epsilon}$ if $c>0$ is sufficiently small, then 
\[ v_1 = \left( - \sin \theta,  \cos \theta,0\right), \]
and 
\[ v_2 =  \left( -\sin \phi \cos \theta,-\sin \phi   \sin \theta, \cos\phi \right). \]
Since $\partial_{\phi} v_2=-v$, it must hold that $\left\lvert\langle u, \partial_{\phi} v_2 \rangle\right\rvert \sim 1$ whenever $\left\lvert \langle x-y, v_1 \rangle\right\rvert \sim 2^{-m_1}$ and $\left\lvert \langle x-y, v_2 \rangle\right\rvert \sim 2^{-m_2}$, where $u = (x-y)/|x-y|$ (this is where $m_1 \gg 2k+\ell \geq 2k$ and $m_2 \gg 2k$ get used). Therefore, the region of $v \in S^2_{+,\epsilon}$ where $\left\lvert \langle x-y, v_1 \rangle\right\rvert \sim 2^{-m_1}$ and $\left\lvert \langle x-y, v_2 \rangle\right\rvert \sim 2^{-m_2}$ has $\theta$ contained in a set of length $\lesssim 2^{2k+\ell-m_1}$, and for each $\theta$, $\phi$ is contained in a set of length $\lesssim 2^{2k-m_2}$. This verifies the claimed surface area bound of $2^{2k+\ell-m_1+2k - m_2}$. 

Substituting this surface area bound into \eqref{pause3} gives 
\begin{equation} \label{pause4}  \eqref{nonedge} \lesssim \log\left(2^j\right)^2\sum_{k=1}^{j/2} \sum_{\ell=0}^{j-2k} 2^{4k+\ell} \iint_{\substack{d(x,y) \sim 2^{-k} \\ |x'-y'| \sim 2^{-2k-\ell}}} \, d\mu(x) \, d\mu(y). \end{equation}
For each $y$, the region of integration in $x$ is contained in a Euclidean cylinder of dimensions $2^{-2k-\ell} \times 2^{-2k}$, which can be covered by $\lesssim 2^{2k+2\ell}$ many parabolic balls of radius $2^{-2k-\ell}$, and therefore contributes $\mu$-measure at most $2^{2k+2\ell} 2^{-\alpha(2k+\ell)}$. Substituting into \eqref{pause4} gives 
\[  \eqref{nonedge} \lesssim \log\left(2^j\right)^2\sum_{k=1}^{j/2} \sum_{\ell=0}^{j-2k} 2^{4k+\ell} 2^{2k+2\ell} 2^{-\alpha(2k+\ell)}. \]
This simplifies to 
\[  \eqref{nonedge} \lesssim \log\left(2^j\right)^2\sum_{k=1}^{j/2} \sum_{\ell=0}^{j-2k} 2^{2k(3-\alpha)+\ell(3-\alpha)}. \]
Since $\alpha < 3$, summing the geometric series in $\ell$ gives 
\[  \eqref{nonedge} \lesssim \log\left(2^j\right)^2\sum_{k=1}^{j/2}  2^{2k(3-\alpha)+(j-2k)(3-\alpha)}. \]
The summand is independent of $k$, so this simplifies to 
\[  \eqref{nonedge} \lesssim \log\left(2^j\right)^3 2^{j(3-\alpha)}. \]
Since $s< \alpha$, this finishes the remaining sub-case for the remaining case of low $k$ in \eqref{lowkbound}, and finishes the proof.  \end{proof}

\begin{rem} If $f_{n,s} = \left( |x|^4 + t^2 \right)^{-s/4}$ where $(x,t) \in \mathbb{R}^n \times \mathbb{R}$ and $n \geq 1$, then $\widehat{f_{n,s}}$ is given by a function which is smooth away from the origin and which satisfies 
\begin{equation} \label{gressmanstein} \left\lvert \widehat{f_{n,s}} \right\rvert \lesssim f_{n,n+2-s}, \qquad 0 < s < n+2. \end{equation}
This follows from an observation in the introduction to \cite{GS06}, that the Fourier transform of a regular homogeneous distribution is a regular homogeneous distribution. This observation was overlooked in \cite[Lemma~2.2]{Ha23}, where it was shown using modified Bessel functions that 
\[ 0 < \widehat{f_{1,s}} \lesssim f_{1,3-s}, \qquad 1 < s < 3. \]
Due to the smoothness and homogeneity mentioned above, the positivity conclusion automatically upgrades to 
\[  \widehat{f_{1,s}} \sim f_{1,3-s}, \qquad 1 < s < 3. \]
Although a proof of positivity of $\widehat{f_{n,s}}$ may require modified Bessel functions, the positivity and lower bound $\widehat{f_{1,s}} \gtrsim f_{1,3-s}$ was not crucial to argument of this section, so \eqref{gressmanstein} may be a first step towards generalising the argument of this section to higher dimensions.  \end{rem}

%\vspace{1cm}
%\begin{footnotesize}
%{\sc Department of Mathematics and Statistics,
%P.O. Box 68,  FI-00014 University of Helsinki, Finland,}\\
%\emph{E-mail address:} 
%\verb"pertti.mattila@helsinki.fi" 
%\end{footnotesize}

\end{document}